\begin{document}
\theoremstyle{plain}
\newtheorem{thm}{Theorem}[section]
\newtheorem*{thm1}{Theorem 1}
\newtheorem*{thm1.1}{Theorem 1.1}
\newtheorem*{thmM}{Main Theorem}
\newtheorem*{thmA}{Theorem A}
\newtheorem*{thm2}{Theorem 2}
\newtheorem{lemma}[thm]{Lemma}
\newtheorem{lem}[thm]{Lemma}
\newtheorem{cor}[thm]{Corollary}
\newtheorem{pro}[thm]{Proposition}
\newtheorem{propose}[thm]{Proposition}
\newtheorem{variant}[thm]{Variant}
\theoremstyle{definition}
\newtheorem{notations}[thm]{Notations}
\newtheorem{rem}[thm]{Remark}
\newtheorem{rmk}[thm]{Remark}
\newtheorem{rmks}[thm]{Remarks}
\newtheorem{defi}[thm]{Definition}
\newtheorem{exe}[thm]{Example}
\newtheorem{claim}[thm]{Claim}
\newtheorem{ass}[thm]{Assumption}
\newtheorem{prodefi}[thm]{Proposition-Definition}
\newtheorem{que}[thm]{Question}
\newtheorem{con}[thm]{Conjecture}
\newtheorem*{assa}{Assumption A}
\newtheorem*{algstate}{Algebraic form of Theorem \ref{thmstattrainv}}

\newtheorem*{dmlcon}{Dynamical Mordell-Lang Conjecture}
\newtheorem*{condml}{Dynamical Mordell-Lang Conjecture}
\newtheorem*{congb}{Geometric Bogomolov Conjecture}

\newtheorem*{pdd}{P(d)}
\newtheorem*{bfd}{BF(d)}

\newtheorem*{probreal}{Realization problems}
\numberwithin{equation}{section}
\newcounter{elno}                
\def\points{\list
{\hss\llap{\upshape{(\roman{elno})}}}{\usecounter{elno}}}
\let\endpoints=\endlist
\newcommand{\SH}{\rm SH}
\newcommand{\Tan}{\rm Tan}
\newcommand{\res}{\rm res}
\newcommand{\Om}{\Omega}
\newcommand{\om}{\omega}
\newcommand{\la}{\lambda}
\newcommand{\mc}{\mathcal}
\newcommand{\mb}{\mathbb}
\newcommand{\surj}{\twoheadrightarrow}
\newcommand{\inj}{\hookrightarrow}
\newcommand{\zar}{{\rm zar}}
\newcommand{\Exc}{{\rm Exc}}
\newcommand{\an}{{\rm an}}
\newcommand{\red}{{\rm red}}
\newcommand{\codim}{{\rm codim}}
\newcommand{\Supp}{{\rm Supp}}
\newcommand{\rank}{{\rm rank}}
\newcommand{\Ker}{{\rm Ker \ }}
\newcommand{\Pic}{{\rm Pic}}
\newcommand{\Der}{{\rm Der}}
\newcommand{\Div}{{\rm Div}}
\newcommand{\Hom}{{\rm Hom}}
\newcommand{\im}{{\rm im}}
\newcommand{\Spec}{{\rm Spec \,}}
\newcommand{\Nef}{{\rm Nef \,}}
\newcommand{\Frac}{{\rm Frac \,}}
\newcommand{\Sing}{{\rm Sing}}
\newcommand{\sing}{{\rm sing}}
\newcommand{\reg}{{\rm reg}}
\newcommand{\Char}{{\rm char\,}}
\newcommand{\Tr}{{\rm Tr}}
\newcommand{\ord}{{\rm ord}}
\newcommand{\id}{{\rm id}}
\newcommand{\NE}{{\rm NE}}
\newcommand{\Gal}{{\rm Gal}}
\newcommand{\Min}{{\rm Min \ }}
\newcommand{\Max}{{\rm Max \ }}
\newcommand{\Alb}{{\rm Alb}\,}
\newcommand{\GL}{{\rm GL}\,}        
\newcommand{\PGL}{{\rm PGL}\,}
\newcommand{\Bir}{{\rm Bir}}
\newcommand{\Aut}{{\rm Aut}}
\newcommand{\End}{{\rm End}}
\newcommand{\Per}{{\rm Per}\,}
\newcommand{\ie}{{\it i.e.\/},\ }
\newcommand{\niso}{\not\cong}
\newcommand{\nin}{\not\in}
\newcommand{\soplus}[1]{\stackrel{#1}{\oplus}}
\newcommand{\by}[1]{\stackrel{#1}{\rightarrow}}
\newcommand{\longby}[1]{\stackrel{#1}{\longrightarrow}}
\newcommand{\vlongby}[1]{\stackrel{#1}{\mbox{\large{$\longrightarrow$}}}}
\newcommand{\ldownarrow}{\mbox{\Large{\Large{$\downarrow$}}}}
\newcommand{\lsearrow}{\mbox{\Large{$\searrow$}}}
\renewcommand{\d}{\stackrel{\mbox{\scriptsize{$\bullet$}}}{}}
\newcommand{\dlog}{{\rm dlog}\,}    
\newcommand{\longto}{\longrightarrow}
\newcommand{\vlongto}{\mbox{{\Large{$\longto$}}}}
\newcommand{\limdir}[1]{{\displaystyle{\mathop{\rm lim}_{\buildrel\longrightarrow\over{#1}}}}\,}
\newcommand{\liminv}[1]{{\displaystyle{\mathop{\rm lim}_{\buildrel\longleftarrow\over{#1}}}}\,}
\newcommand{\norm}[1]{\mbox{$\parallel{#1}\parallel$}}
\newcommand{\boxtensor}{{\Box\kern-9.03pt\raise1.42pt\hbox{$\times$}}}
\newcommand{\into}{\hookrightarrow}
\newcommand{\image}{{\rm image}\,}
\newcommand{\Lie}{{\rm Lie}\,}      
\newcommand{\CM}{\rm CM}
\newcommand{\sext}{\mbox{${\mathcal E}xt\,$}}  
\newcommand{\shom}{\mbox{${\mathcal H}om\,$}}  
\newcommand{\coker}{{\rm coker}\,}  
\newcommand{\sm}{{\rm sm}}
\newcommand{\pgcd}{\text{pgcd}}
\newcommand{\trd}{\text{tr.d.}}
\newcommand{\tensor}{\otimes}
\newcommand{\hotimes}{\hat{\otimes}}
\newcommand{\prop}{{\rm prop}}
\newcommand{\CH}{{\rm CH}}
\newcommand{\tr}{{\rm tr}}

\renewcommand{\iff}{\mbox{ $\Longleftrightarrow$ }}
\newcommand{\supp}{{\rm supp}\,}
\newcommand{\ext}[1]{\stackrel{#1}{\wedge}}
\newcommand{\onto}{\mbox{$\,\>>>\hspace{-.5cm}\to\hspace{.15cm}$}}
\newcommand{\propsubset}
{\mbox{$\textstyle{
\subseteq_{\kern-5pt\raise-1pt\hbox{\mbox{\tiny{$/$}}}}}$}}
\newcommand{\sA}{{\mathcal A}}
\newcommand{\sB}{{\mathcal B}}
\newcommand{\sC}{{\mathcal C}}
\newcommand{\sD}{{\mathcal D}}
\newcommand{\sE}{{\mathcal E}}
\newcommand{\sF}{{\mathcal F}}
\newcommand{\sG}{{\mathcal G}}
\newcommand{\sH}{{\mathcal H}}
\newcommand{\sI}{{\mathcal I}}
\newcommand{\sJ}{{\mathcal J}}
\newcommand{\sK}{{\mathcal K}}
\newcommand{\sL}{{\mathcal L}}
\newcommand{\sM}{{\mathcal M}}
\newcommand{\sN}{{\mathcal N}}
\newcommand{\sO}{{\mathcal O}}
\newcommand{\sP}{{\mathcal P}}
\newcommand{\sQ}{{\mathcal Q}}
\newcommand{\sR}{{\mathcal R}}
\newcommand{\sS}{{\mathcal S}}
\newcommand{\sT}{{\mathcal T}}
\newcommand{\sU}{{\mathcal U}}
\newcommand{\sV}{{\mathcal V}}
\newcommand{\sW}{{\mathcal W}}
\newcommand{\sX}{{\mathcal X}}
\newcommand{\sY}{{\mathcal Y}}
\newcommand{\sZ}{{\mathcal Z}}
\newcommand{\A}{{\mathbb A}}
\newcommand{\B}{{\mathbb B}}
\newcommand{\C}{{\mathbb C}}
\newcommand{\D}{{\mathbb D}}
\newcommand{\E}{{\mathbb E}}
\newcommand{\F}{{\mathbb F}}
\newcommand{\G}{{\mathbb G}}
\newcommand{\HH}{{\mathbb H}}
\newcommand{\I}{{\mathbb I}}
\newcommand{\J}{{\mathbb J}}
\newcommand{\M}{{\mathbb M}}
\newcommand{\N}{{\mathbb N}}
\renewcommand{\P}{{\mathbb P}}
\newcommand{\Q}{{\mathbb Q}}
\newcommand{\R}{{\mathbb R}}
\newcommand{\T}{{\mathbb T}}
\newcommand{\U}{{\mathbb U}}
\newcommand{\V}{{\mathbb V}}
\newcommand{\W}{{\mathbb W}}
\newcommand{\X}{{\mathbb X}}
\newcommand{\Y}{{\mathbb Y}}
\newcommand{\Z}{{\mathbb Z}}
\newcommand{\bk}{{\mathbf{k}}}
\newcommand{\bbk}{{\overline{\mathbf{k}}}}
\newcommand{\Fix}{\mathrm{Fix}}

\newcommand{\tor}{{\mathrm{tor}}}
\renewcommand{\div}{{\mathrm{div}}}

\newcommand{\trdeg}{{\mathrm{trdeg}}}
\newcommand{\Stab}{{\mathrm{Stab}}}

\newcommand{\OK}{{\overline{K}}}
\newcommand{\ok}{{\overline{k}}}

\title[]{Geometric Bogomolov conjecture in arbitrary characteristics}

\author{Junyi Xie}


\address{Univ Rennes, CNRS, IRMAR - UMR 6625, F-35000 Rennes, France}

\email{junyi.xie@univ-rennes1.fr}

\author{Xinyi Yuan}

\address{BICMR, Peking University, Haidian District, Beijing 100871, China}

\email{yxy@bicmr.pku.edu.cn}


\date{\today}

\bibliographystyle{alpha}

\maketitle

\begin{abstract}
We give a proof of the full geometric Bogomolov conjecture.
\end{abstract}

\tableofcontents

\section{Introduction}
The goal of this paper is to prove the full geometric Bogomolov conjecture.
We first reduce it to the case that the extension of the base fields has transcendence degree 1, and then we prove the later case by intersection theory in algebraic geometry. 
The proof uses Yamaki's reduction theorem on the geometric Bogomolov conjecture and the Manin--Mumford conjecture proved by Raynaud and Hrushovski.  

\subsection{Abelian varieties and heights}\label{par:1.1.1}
 
Let $k$ be an algebraically closed field.
Let $K/k$ be a finitely generated field extension of transcendence degree $\trdeg(K/k)\geq 1$.
Let $A$ be an abelian variety over $K$ of dimension $g$. 
Let $L$ be a symmetric and ample line bundle over $A$.
To define canonical height of subvarieties of $A$, we need choose integral models. 

\subsubsection{Integral models} \label{sectioncanheight}

A \emph{projective model of $K/k$} is a normal projective variety $S$ over $k$ with function field $K$.
It follows that $\dim S=\trdeg(K/k)$.

A \emph{polarization of $K/k$} is a pair $(S, \sM)$ consisting of a projective model $S$ of $K/k$ and an ample line bundle $\sM$ over $S$. 

An \emph{integral model of $(A,L)$} over $S$ is a pair $(\sA, \pi, \sL)$ where: 
\begin{points}
\item[$\bullet$] $\sA$ is a projective variety over $k$;
\item[$\bullet$] $\pi: \sA\to S$ is a projective morphism whose generic fiber is $A$;
\item[$\bullet$] $\sL$ is a line bundle on $\sA$ extending $L$.
\end{points}
We usually abbreviate $(\sA, \pi, \sL)$ as $(\sA, \sL)$.

\subsubsection{Canonical heights}

Let $X$ be any closed subvariety of $A$ over $K$.
The {\bf{naive height}} of $X$ with respect to the polarization $(S,\sM)$
and the integral model $(\sA, \sL)$ is defined as 
\begin{equation}
\label{eq:naiveheight}
h_{(\sA,\sL)}^{\sM}(X):=\frac{\sL^{\dim X+1}\cdot (\pi^*\sM)^{\dim S-1}\cdot \sX}{(1+\dim X)\deg_L(X)},
\end{equation}
where $\sX$ is the Zariski closure of $X$ in $\sA$, the numerator is the intersection number in $\sA$, and $\deg_L(X)=L^{\dim X}\cdot X$
is the intersection number in $A$.

By Tate's limiting argument, the {\bf{canonical height}} of $X$ with respect to the polarization $(S,\sM)$
and the line bundle $L$ over $A$ is the limit
\begin{equation}\label{eq:ntlimit}
\hat{h}_L^{\sM}(X)
:=\lim_{n\to +\infty} \frac{1}{n^2}h_{(\sA,\sL)}([n]X).
\end{equation}
Here $[n]X$ is the image of $X$ under the multiplication map $[n]:A\to A$.
The limit exists, depends on $(S,\sM)$ and $L$, but is independent of the integral model $(\sA, m, \sL)$; see Gubler's work \cite[Theorem 3.6]{Gubler2007} and \cite[Theorem 11.18]{Gubler:Pisa03}.
Moreover, $\hat{h}_L^{\sM}(X)\geq0$ by choosing $\sL$ to be ample in the definition.
Note that our heights are normalized by the degree and dimension in the denominator. 

The definition of the heights can be easily extended to subvarieties of $A_{\overline{K}}=A\otimes_K\overline{K}$. In fact, let $X'$ be a closed subvariety of $A_{\overline{K}}$.
Let $X$ be the minimal closed subvariety of $A$ such that $X_{\overline K}$ contains $X'$ in $A_{\overline{K}}$, or equivalently $X$ is the image of the composition 
$X'\to A_{\overline{K}}\to A$ as schemes. 
Then we simply define
\begin{equation}\label{eq:ntlimit}
h_{(\sA,m,\sL)}^{\sM}(X')
:=h_{(\sA,m,\sL)}^{\sM}(X),\qquad
\hat{h}_L^{\sM}(X')
:=\hat{h}_L^{\sM}(X).
\end{equation}
For simplicity, we usually omit the dependence on $L$ and $(S,\sM)$ and write 
$$\hat{h}(X)=\hat{h}_L^{\sM}(X),\qquad
\hat{h}(X')=\hat{h}_L^{\sM}(X').$$

In the special case of $\dim X'=0$, we get the canonical height  for points 
$$\hat{h}=\hat{h}_L^{\sM}:A(\overline{K})\to [0,+\infty)$$ 
with respect to $L$ and $(S,\sM)$.

In the special case of $\trdeg(K/k)=1$, the situation is much easier, since the the projective model $S$ is unique and the line bundle $\sM$ is not used in the definition of the heights.

\subsubsection{Small points and special subvarieties}
For any subvariety $X$ of  $A_{\overline{K}}$ and any $\epsilon> 0$, we set 
\begin{equation}
X(\epsilon):=\{x\in X(\overline{K})|\,\, \hat{h}(x)<\epsilon\}.
\end{equation}
We say that $X$ \emph{contains a dense set of small points of $A/K/k$} if $X(\epsilon)$ is Zariski dense in $X$ for all $\epsilon >0$.
This notion is actually independent of the choice of $L$ and $(S,\sM)$ to define the canonical height $\hat h$, since any two such heights bound each other up to positive  multiples.

Let $(A^{\overline{K}/k}, \tr)$ be the $\overline{K}/k$-trace of  $A_{\overline{K}}$; it
is the final object of the category of pairs $(C,f)$, where $C$ is an abelian variety over $ k$ and $f$ is 
a morphism from $C\otimes_{ k}\overline{K}$ to  $A_{\overline{K}}$ (cf. \cite[$\mathsection$7]{Lang:AbelianBook} or~\cite[$\mathsection$6]{Conrad}).
If $\Char k=0$, $\tr$ is a closed immersion and $A^{\overline{K}/ k}\otimes_{k}\overline{K}$ can be naturally viewed 
as an abelian subvariety of $A_{\overline{K}}$;
if $\Char k>0$, $\tr$ is a purely inseparable isogeny to its image. 

A {\bf{torsion subvariety}} of $A_{\overline K}$ is a translate $a+C$ of an abelian 
subvariety $C\subset A_{\overline K}$ by a torsion point $a$ of $A_{\overline K}$. A subvariety $X$ of $A_{\overline{K}}$ is said to be {\bf{special}} if  
\begin{equation}
X=\tr(Y{\otimes_{k}\overline{K}})+T
\end{equation}
for some torsion subvariety $T$ of $A_{\overline{K}}$ and  some subvariety $Y$ of $A^{\overline{K}/k}$.
When $X$ is special,  $X(\epsilon)$ is Zariski dense in $X$ for all $\epsilon >0$ (\cite[Theorem 5.4, Chapter 6]{Lang1983}).

\subsection{Geometric Bogomolov conjecture}

The goal of this paper is to prove the following theorem, which is known as the geometric Bogomolov conjecture. 

\begin{thm}[geometric Bogomolov conjecture] \label{thmmain} 
Let $k$ be an algebraically closed field.
Let $K/k$ be a finitely generated field extension of transcendence degree at least 1.
Let $A$ be an abelian variety over $K$. 
Let $X$ be a closed subvariety of $A_{\overline{K}}$. 
If $X$ contains a dense set of small points of $A/K/k$, then $X$ is special.
\end{thm}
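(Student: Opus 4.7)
The plan is to follow the two-step strategy signaled in the introduction: first reduce to the case $\trdeg(K/k)=1$, and then treat that case by intersection theory on an integral model, invoking the Manin--Mumford conjecture of Raynaud and Hrushovski at the end to identify torsion subvarieties.

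For the reduction, suppose $\trdeg(K/k)\geq 2$ and choose a projective model $S$ of $K/k$ together with a surjective morphism $\varphi\colon S\to B$ onto a projective variety of dimension $\dim S-1$. Passing to $k':=\overline{k(B)}$ and to the generic fiber of $\varphi$ presents the problem over a new algebraically closed base field $k'$ with $\trdeg(K'/k')=1$, where $K'=K\cdot k'$. The delicate point is the choice of polarization on $S$: I would take $\sM$ of the form $\varphi^{*}\sN\otimes\sH^{\otimes N}$ with $\sN$ very ample on $B$, $\sH$ very ample on $S$, and $N$ sufficiently large, so that the intersection product in (\ref{eq:naiveheight}) decomposes in a manner that makes the dense-small-points property descend to the generic fiber. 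Iterating or applying this descent once reduces the problem to the transcendence degree one case.

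In the transcendence-degree-one case, a projective model $S$ is a smooth projective curve over $k$, and the canonical height is independent of the choice of $\sM$. Here I would apply Yamaki's reduction theorem to further assume that the $\overline K/k$-trace of $A$ is trivial; the conclusion then becomes that $X$ must be a torsion subvariety of $A_{\overline K}$. Fix an integral model $(\sA,\pi,\sL)$ with $\sL$ symmetric and nef, compatible with the multiplication maps $[n]$ in the sense that $[n]^{*}\sL\equiv n^{2}\sL$ up to a fiberwise-trivial piece. The hypothesis $\hat h(X)=0$, which follows from the density of small points, translates via (\ref{eq:naiveheight}) and Tate's limit into the vanishing of certain top intersection numbers of $\sL$ on $\sX$. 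The goal is to exploit this vanishing, together with the trivial-trace hypothesis and the nefness of $\sL$, to produce a Zariski-dense set of torsion points of $A_{\overline K}$ inside $X$; once this is achieved, the Raynaud--Hrushovski form of Manin--Mumford, valid in arbitrary characteristic, forces $X$ to be a torsion subvariety and hence special.

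The principal obstacle I anticipate is precisely this last step: extracting from the vanishing of an intersection number on $\sX$ a dense set of torsion points in $X$. I would attempt it via an arithmetic Hodge-index style inequality on an enlargement of $\sX$ (for instance on $\sX\times_{S}\sX$, or on the total space formed by the translates $\sX+[n]\sX$ as $n$ varies), combined with the positivity of $\sL$ and the trivial-trace condition, in order to rule out any non-torsion ``geometric'' contribution to the height. In positive characteristic the purely inseparable nature of the trace morphism introduces some friction, but since Yamaki's reduction already absorbs the isotrivial directions and Raynaud--Hrushovski is characteristic-free, one can reasonably hope for a uniform argument across all characteristics.
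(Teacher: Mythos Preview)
Your overall architecture matches the paper's---reduce to $\trdeg(K/k)=1$, apply Yamaki's reduction to trivial trace and good reduction, then manufacture enough torsion points in $X$ to invoke Manin--Mumford---but both steps have real gaps. In the reduction step you arrange for small points to persist over the new base $k'$, but you do not address the harder converse: why should $X$ being special over $k'$ force $X$ to be special over $k$? The paper resolves this by producing \emph{two} intermediate fields $k_1\neq k_2$, each of transcendence degree $1$ over $k$ (via distinct pencils $S\dashrightarrow\P^1_k$, Lemma~\ref{generic hyperplane}), and by proving (Proposition~\ref{lemnonspecial}) that the set of $k'\in I(K/k)$ for which $X$ is special in $A/K/k'$ has a unique minimal element $k_{A,X}$; specialness for both $k_1$ and $k_2$ then forces $k_{A,X}\subseteq k_1\cap k_2=k$. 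A single fibration $S\to B$ cannot supply this, and your polarization manipulation addresses only the height inequality, not specialness.

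The main gap is the one you flag yourself, and the paper's mechanism is quite different from a Hodge-index inequality. One shows (Lemma~\ref{addition}) that for some $r$ the image of the summation map $X^r\to A$ is a torsion coset, so after translation there is a surjection $f\colon X_{r-1}\times X\to A'$ onto an abelian subvariety with relative dimension $e<\dim X$. The decisive input is that any torsion multi-section $\sT$ of the abelian scheme $\sA'\to S$ is numerically equivalent to a positive multiple of $[\sL_{\sA'}]^{\dim A'}$ (Proposition~\ref{numerical equivalent}); combined with $[\sL_\sB]^{\dim\sY}\cdot[\sY]=0$ this forces every component of $f^{-1}(\sT)$ dominating $\sT$ to have height zero (Proposition~\ref{torsion fiber}), with a separate estimate (Proposition~\ref{prostrleqtot}) controlling the non-proper part of the intersection. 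One then inducts on $\dim X$: the fibers $f^{-1}(t)$ for torsion $t$ have dimension $e<\dim X$ and height zero, hence are torsion by induction, so torsion points are dense in $X_{r-1}\times X$. Without the numerical identity $[\sT]\equiv c\,[\sL]^{\dim A'}$ and this dimension induction, there is no route from $\hat h(X)=0$ to a supply of torsion points.
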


The above geometric Bogomolov conjecture was proposed by Yamaki \cite[Conjecture 0.3]{Yamaki2013}, but particular
 instances of it were studied earlier by Gubler in~\cite{Gubler2007}. It is an analog over function fields of the original Bogomolov conjecture over number fields which was proved by Ullmo \cite{Ullmo1998} and Zhang \cite{Zhang1998}.

Let us give a quick historical recall on results about the Bogomolov conjecture and its geometric version.
The original Bogomolov conjecture over number fields was proved by Ullmo \cite{Ullmo1998} and Zhang \cite{Zhang1998}, where the major techniques are the equidistribution theorem of Szpiro--Ullmo--Zhang \cite{SUZ}.
The treatments are extended in terms of the Moriwaki height to finitely generated fields over number fields by Moriwaki \cite{Moriwaki2000}.

For the geometric Bogomolov conjecture, it was first proved by Gubler \cite{Gubler2007} when $A$ is totally degenerate at some place of $K/k$.
Yamaki \cite{Yamaki2016c, Yamaki2016a} reduced the geometric Bogomolov conjecture to the case of abelian varieties with good reduction everywhere and with a trivial trace; based on this reduction theorem, Yamaki \cite{Yamaki2017} proved the conjecture for $\dim(X)=1$ or $\mathrm{codim}(X)=1$; he also proved the conjecture for abelian varieties of dimension 5, good reduction everywhere and with trivial trace in \cite{Yamaki2017a}.

The works of Gubler \cite{Gubler2007} and Yamaki \cite{Yamaki2016c} are 
valid in arbitrary characteristics, and extend the strategy of Ullmo and Zhang applying equidistribution theorems. 
In fact, Gubler \cite{Gubler2007} considered tropical varieties of subvarieties of abelian varieties over non-archimedean fields, 
extended the equidistribution theorem of Szpiro--Ullmo--Zhang to the tropical setting, and studied the equilibrium measure in this setting.
Yamaki \cite{Yamaki2016c} did more careful analysis of the situation using equidistribution over Berkovich spaces. The latter equidistribution theorem was proved by Gubler \cite{Guber2008} and Faber \cite{Faber2009b}, which generalized the equidistribution theorems over number fields of Szpiro--Ullmo--Zhang \cite{SUZ},  Chambert-Loir \cite{CL2006}, and Yuan \cite{Yuan2008}.

Before Yamaki \cite{Yamaki2017}, various examples and partial results of the geometric Bogomolov conjecture with $\dim(X)=1$ were previously obtained by \cite{Parshin, Moriwaki1996, Moriwaki1997, Moriwaki1998, Yamaki2002, Yamaki2008, Faber2009, Cinkir2011}.
In particular, Cinkir \cite{Cinkir2011} proved the geometric Bogomolov conjecture for 
$\dim(X)=1$ and $\Char k=0$, based on a height identity of Zhang \cite{Zhang2010}.

In the case $\Char k=0$, Gao--Habegger  \cite{Gao2019} proved the geometric Bogomolov conjecture for $\trdeg(K/k)=1$. 
Recently Cantat--Gao--Habegger--Xie \cite{Cantat2021} proved the full geometric Bogomolov conjecture for $\Char k=0$. 
These proofs are based on the Betti map in the complex-analytic setting.

\subsection{Plan of proof}

Our proof of the geometric Bogomolov conjecture is based on Yamaki's reduction theorem, which reduces the conjecture to the case of good reduction and trivial trace.
We also need the Manin--Mumford conjecture (in the case of trivial trace) proved by Raynaud and Hrushovski.

First, we reduce the conjecture to the case that $K/k$ has transcendence degree 1. This is the main goal of \S\ref{sec trdeg}.
The idea is to take intermediate fields $k'$ of $K/k$, algebraically closed in $K$ and with transcendence degree 1 over $k$, such that the geometric Bogomolov conjecture for $(A/K/k,X)$ follows from those of $(A/K/k',X)$.
For the construction, let $(S,\sM)$ be a polarization of $K/k$.
Use $\sM$ to get a pencil of hyperplane sections of $S$ 
parametrized by $\P_k^1$. Take $k'=k(\P_k^1)$ to be the function field.
There is a generic hyperplane section $H$ over the generic point $\Spec k'$ of $\P_k^1$. 
In particular, $(H,\sM|_H)$ is a polarization of $K/k'$.  
The process from $(S,\sM)$ to $(H,\sM|_H)$ does not increase canonical heights of subvarieties of $A_\OK$. 
Then we carry out careful analysis of the change of special subvarieties of $A$ under this process.

We remark that a well-known procedure to reduce the transcendence degree is to take a closed hyperplane section of 
of $S$ over $k$. This process reduces $K/k$ to $K'/k$ (instead of our $K/k'$), and thus changes $X$ and $A$ by the corresponding reductions. Because of this, it is hard to track density of small points, and it is also hard to track the change of the trace of $A$.
Our method does not change $K$ and thus make everything trackable.

As a consequence, we can assume that $K/k$ has transcendence degree 1. 
Then we apply Yamaki's reduction theorem. 
This reduces the problem to the essential case that $K/k$ has transcendence degree 1, 
and $A$ has good reduction over $S$ and trivial $K/k$-trace. 
Here $S$ is the unique smooth projective curve over $k$ with function field $K$.
Then $A\to \Spec K$ extends to an abelian scheme $\pi:\sA\to S$.

The second step is to prove the Bogomolov conjecture in the essential case. 
Denote by $\sX$ the Zariski closure of $X$ in $A$.
Let $\sL$ be a symmetric, relatively ample and rigidified line bundle over $\sA$.
Our key property is that any torsion multi-section $\sT$ of $\sA\to S$ is numerically equivalent to a multiple of the self-intersection $\sL^{\dim A}$ in the Chow group of 1-cycles in $\sA$.
This is proved in \S\ref{sec line bundle}.

Now we come to \S\ref{sec final}, which is the core of the proof of the essential case.
To illustrate the idea, assume that there is a positive integer $r$ such that the summation map $f:X^r\to A$ is surjective and generically finite. This happens, for example, if $X$ is a curve and $A$ is the Jacobian variety of $X$.
We have a morphism $f:\sX_{/S}^r\to \sA$ over $S$. 
Note that $X^r\subset A^r$ and $\sX_{/S}^r\subset \sA_{/S}^r$.
Moreover, $X^r$ has canonical height 0 in $A^r$, by Zhang's fundamental inequality and the assumption that $X$ has a dense set of small points. 

Let $\sT$ be a torsion multi-section of $\sA$. 
Let us first assume that $\sT'=f^{-1}(\sT)$ is finite and flat over $S$, and address the technical issue later. 
Denote by $h:\sA_{/S}^r\to \sA$ the summation morphism. 
For any symmetric, relatively ample and rigidified line bundle 
$\sL_r$ over $\sA_{/S}^r$, we have
$$
[\sT']\cdot \sL_r^{\dim \sT'}=[\sX_{/S}^r] \cdot h^*[\sT]\cdot \sL_r^{\dim \sT'}
=a\, [\sX_{/S}^r] \cdot h^*(\sL^{\dim A})\cdot \sL_r^{\dim \sT'}
=0.
$$
Here $a>0$ is a constant coming from the numerical equivalence mentioned above. 
The last equality follows from the fact that $X^r$ has canonical height 0 in $A^r$.
This implies that $T'=\sT'_K$ has canonical height 0, and thus is a finite set of torsion points of $A^r$. 
When varying $\sT$, we obtain a Zariski dense set of torsion points of $X^r$ in $A^r$. 
By the Manin-Mumford conjecture, $X^r$ is torsion, and thus $X$ is torsion. 

In general, we are not able to find $r$ such that $f:X^r\to A$ is surjective and generically finite.
But we can manage to find $r$ such that $f$ is surjective (up to replacing $A$ by an abelian subvariety), and that the relative dimension $e$ of $f$ is strictly smaller than $\dim X$.
If $\sT'=f^{-1}(\sT)$ is flat over $S$ of dimension $e+1$, then the above process still implies that $T'=\sT'_K$ has canonical height 0. 
Then we can conclude that $T'$ is a torsion subvariety in $A^r$ by induction, since $\dim T'<\dim X$.
Torsion points of $T'$ are also torsion points of $X^r$. 
Varying $T'$, we obtain a dense set of torsion points of $X^r$ in $A^r$. 
Then the Manin-Mumford conjecture implies that $X$ is torsion.

In the above, we have made a technical assumption that $\sT'=f^{-1}(\sT)$ is flat over $S$ of dimension $e+1$.
To remove this caveat, we will have to treat the case that $\sT'=f^{-1}(\sT)$ has irreducible components of dimension bigger than the expected dimension. Note that it is easy to choose $\sT$ so that the Zariski closure $\sT^*$ of $T'=\sT'_K$ in $\sT'$ has the correct dimension. 
Then we prove that, the difference $[\sX_{/S}^r] \cdot h^*[\sT]-[\sT^*]$ is linearly equivalent to an effective Chow cycle of the correct dimension. 
It remedies the above argument. 
This is the content of \S\ref{sec nonproper}, and was inspired by a result of Jia--Shibata--Xie--Zhang.

The idea of converting the Bogomolov conjecture to the Manin--Mumford conjecture was originally used by Zhang \cite{Zhang1992, Zhang1995a} in his proof of the Bogomolov conjecture for powers of $\G_m$. 
In particular, \cite[Lemma 6.6]{Zhang1995a} produced enough torsion hypersurfaces of $X$, and thus enough torsion points of $X$ by induction.  
On the other hand, our situation is more complicated due to lack of torsion hypersufaces in $A$, and our solution is based on the crucial numerical identity between $\sT$ and $\sL^{\dim A}$. As these cycles are not of codimension 1, it also brings issues of non-proper intersection discussed above.

\subsection{Notation and Terminology}
\begin{points}
\item[$\bullet$] For any field $F$, denote by $\overline F$ its algebraic closure.
\item[$\bullet$] For a field extension $K/k$, denote by $\trdeg(K/k)$ the transcendence degree.
\item[$\bullet$] A \emph{variety} is an integral separated scheme of finite type over a field.
\item[$\bullet$] For a Cartier divisor $H$ on a scheme $X$, denote by $|H|$ the linear system associated to $H.$
\item[$\bullet$] For an integral scheme $X$, denote by $\eta_X$ its generic point.
\item[$\bullet$] For a scheme $X$ over a field, denote by $X^{\sm}$ its smooth locus.
\item[$\bullet$] A \emph{subvariety} of a variety is a closed integral subscheme.
\item[$\bullet$]
By a \emph{line bundle} over a scheme, we mean an invertible sheaf over the scheme. 
We often write or mention tensor products of line bundles additively, so $aL-bM$ means
$L^{\otimes a}\otimes M^{\otimes (-b)}$
for line bundles $L,M$ and integers $a,b$.
\end{points}

\medskip

\noindent\textbf{Acknowlegement.}
The authors would like to thank the support of the China-Russia Mathematics Center during the preparation of this paper.
The first-named author would like to thank Beijing International Center for Mathematical Research in Peking University 
for the invitation. The second-named author would also like to thank the hospitality of Shandong University, Qingdao for a workshop in July 2021.

\section{Non-proper intersections} \label{sec nonproper}

Let $B$ be a smooth projective variety of dimension $d$ over an algebraically closed field $k$.
For every $i$-cycle $Z$ of $\Q$-coefficient of $B$,  denote by $[Z]$ its class in the Chow group
$\CH_i(B)_{\Q}$. For $\alpha\in \CH_i(B)_{\Q}$, write $\alpha\geq 0$ if $\alpha$ can be represented by an effective $i$-cycle.
The goal of this section is to prove the following technical result. 

\begin{pro}\label{prostrleqtot}
Let $g: B\to Y$ be a flat morphism between smooth projective varieties.
Let $X$ be a closed subvariety of $B$ such that $f=g|_X: X\to Y$ is surjective. 
Denote $e=\dim X-\dim Y$. 
Let $V$ be a closed subvariety of $Y$ which is not contained in 
$$Y_{e+1}=\{y\in Y|\,\, \dim f^{-1}(y)\geq e+1\}.$$ 
Let $Z_1, \dots, Z_n$ be all irreducible components of $f^{-1}(V)$ satisfying 
$f(Z_i)=V.$ Then $\dim Z_i=\dim V+e$ for $i=1,\dots,n$. 

Assume furthermore that $V\cap Y_{e+1}$ is finite and contained in $V^{\sm}$.
Then we have 
$$\sum_{i=1}^n m_i[Z_i]\leq g^*[V]\cdot [X]$$
in $\CH_{\dim V+e}(B)_\Q$. 
Here $m_i$ is the multiplicity of $Z_i$ in $f^{-1}(V).$ 
\end{pro}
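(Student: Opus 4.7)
\medskip

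\noindent\textbf{Proof plan.}

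My plan is to prove the dimension statement first, then reduce the inequality to a refined intersection computation via Fulton's theory.

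For the dimensions: since $B$ is smooth (hence Cohen--Macaulay) and $g$ is flat, $g^{-1}(V)$ has pure dimension $\dim V+r$ with $r=\dim B-\dim Y$, and Serre's codimension inequality in $B$ forces every irreducible component of $X\cap g^{-1}(V)=f^{-1}(V)$ to have dimension at least $\dim V+e$. Conversely, the hypothesis $V\not\subset Y_{e+1}$ gives some $v\in V$ with $\dim f^{-1}(v)\le e$; semicontinuity of fiber dimension applied to the surjection $Z_i\to V$ then yields $\dim Z_i-\dim V\le e$. Combining the two bounds gives the claimed equality.

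For the inequality, since $Y$ is smooth, the graph embedding $\gamma_f\colon X\hookrightarrow X\times Y$ is a regular embedding of codimension $\dim Y$, so $f^*[V]:=\gamma_f^!([X\times V])$ is defined in $\CH_{\dim V+e}(f^{-1}(V))_\Q$ by Fulton's refined Gysin construction. A standard base-change identification shows that the pushforward of $f^*[V]$ to $B$ equals $g^*[V]\cdot[X]$, so it suffices to produce an effective representative of $f^*[V]-\sum_i m_i[Z_i]$ in $\CH_{\dim V+e}(X)_\Q$.

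To produce it, I would apply Fulton's distinguished-subvariety decomposition. Each component $Z_i$ (whose generic point maps to $\eta_V\in V^{\sm}$) contributes $m_i[Z_i]$: near $\eta_{Z_i}$ the subscheme $g^{-1}(V)\subset B$ is a regular embedding of the expected codimension, and the top-dimensional piece of the Segre class $s(f^{-1}(V),X\times V)$ recovers the scheme-theoretic multiplicities. Every remaining component $W_j$ satisfies $f(W_j)\subset V\cap Y_{e+1}$ by the first part, and hence lies in a single fiber $f^{-1}(v_j)$ over a point $v_j\in V\cap Y_{e+1}$. Here the hypothesis $V\cap Y_{e+1}\subset V^{\sm}$ plays its essential role: it makes $f^*T_Y|_{W_j}$ trivial, so $c_k(f^*T_Y|_{W_j})=0$ for $k\ge 1$, and only an effective Segre-class contribution from $W_j$ survives.

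The hardest part, I expect, will be the bookkeeping of the distinguished contributions at the excess components $W_j$, together with the verification that the main contribution at each $Z_i$ really is $m_i[Z_i]$ (rather than a smaller Serre--Tor multiplicity that might appear when $X$ is not Cohen--Macaulay along $Z_i$). As indicated in the introduction, the non-proper intersection argument needed here is inspired by a similar analysis of Jia--Shibata--Xie--Zhang.
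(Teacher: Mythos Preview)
Your dimension argument is fine and essentially matches the paper's. The substantive issue is in the second half.

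There is a genuine gap in your treatment of the excess components. Your key claim is that, because $c(f^*T_Y|_{W_j})=1$, the contribution of $W_j$ to $\gamma_f^![X\times V]$ is ``only an effective Segre-class contribution.'' But with $c=1$ the contribution from $W_j$ is the piece of the Segre class of the normal cone in degree $\dim V+e$. If $\dim W_j=\dim V+e$ this is the top piece $m_j'[W_j]$, which is indeed effective; however nothing in the hypotheses prevents $\dim W_j>\dim V+e$ (we only know $v_j\in Y_{e+1}$, and $\dim f^{-1}(v_j)$ may well exceed $\dim V+e$). In that case you need a genuinely lower-degree Segre class, and Segre classes of cones are not effective in lower degrees in general. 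So the effectivity you need is not ``bookkeeping'' --- it is a positivity statement that your outline does not supply. Relatedly, the triviality of $f^*T_Y|_{W_j}$ follows simply from $W_j$ mapping to a single point of $Y$; it has nothing to do with $v_j\in V^{\sm}$. Thus you have not located where the smoothness hypothesis actually enters, which is a sign that the mechanism you have in mind is not the right one.

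The paper's argument is genuinely different and avoids excess-intersection positivity entirely. It chooses very ample divisors $H_1,\dots,H_r$ on $Y$ (with $r=\codim(V,Y)$) passing through $V$, so that $H_1\cdots H_r=V+W$ with $W$ effective. A Bertini-type lemma --- and this is exactly where $V\cap Y_{e+1}\subset V^{\sm}\cap Y^{\sm}$ is used --- arranges that near each bad point $v_j$ the complete intersection $H_1\cap\dots\cap H_r$ is \emph{scheme-theoretically equal} to $V$, so $W$ misses all the $v_j$; one can also make $W\cap Y_l$ proper for all $l\ge e+1$. Then $g^*[W]\cdot[X]=[f^{-1}W]$ is an honest effective cycle of the correct dimension (proper intersection), and a separate elementary lemma (induction on $r$) shows that the ``proper part'' of $g^*H_1\cdots g^*H_r\cdot X$ is bounded above by the full product. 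Subtracting $g^*[W]\cdot[X]$ from both sides yields the inequality. In short, the paper trades your excess-intersection analysis for a moving/Bertini step that makes all remaining intersections proper; the smoothness hypothesis is what allows the hyperplanes to cut out $V$ exactly (with multiplicity one) at the finitely many bad points.
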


Note that if $X$ is smooth, then the last inequality can be simplified as the inequality 
$$\sum_{i=1}^n m_iZ_i\leq f^*[V]$$
in $\CH_{\dim V+e}(X)_\Q$. 

The proposition will be used in \S\ref{sec final}. 
The remaining part of this section is to prove the proposition. 
Readers might skip the proof temporarily and move to the next section at the first time of reading this paper.

The idea to prove the proposition is to use complete intersections of hyperplane sections to bound the proper part of the intersection. See Proposition \ref{propropintboundhyp} for the result on complete intersection. 
In the following, we will start with a Bertini type of result to choose suitable hyperplane sections.

\subsection{A Bertini type of result}

\begin{pro}\label{probertini}Let $Y$ be a projective variety over an algebraically closed field $k$.
Let $V$ be a closed subvariety of $Y$ of codimension $r\geq 1$ such that $Y$ is smooth at $\eta_V.$
Let $Z_1,\dots,Z_m$ be irreducible subvarieties of $Y$.
Assume that 
\begin{points}
\item[$\bullet$] $(\cup_{i=1}^m Z_i)\cap V$ is finite; 
\item[$\bullet$] $(\cup_{i=1}^m Z_i)\cap V\subseteq Y^{\sm}\cap V^{\sm}$.
\end{points}
Let $H$ be a Cartier divisor on $Y$ such that $\sO_Y(H)\otimes I_V$ is generated by global sections, where
 $I_V\subset \sO_Y$ is the ideal sheaf associated to $V$.
Every section $s\in H^0(\sO_Y(H)\otimes I_V)$ defines a divisor 
$H_s\in |H|$ via the inclusion $H^0(\sO_Y(H)\otimes I_V)\hookrightarrow H^0(\sO_Y(H)).$
Then for general elements 
$$(s_1,\dots, s_r)\in H^0(\sO_Y(H)\otimes I_V)^r,$$ 
the following holds:
\begin{points}
\item $H_{s_1}\cap\dots\cap H_{s_r}$ is a proper intersection in $Y$;
\item $H_{s_1}\cdots H_{s_r}=V+W$ where $W$ is an effective $(\dim Y-r)$-cycle such that 
$V\not\subseteq \Supp \,W$, and $W\cap Z_i$ is a proper intersection for every $i=1,\dots,m$.
\end{points}
\end{pro}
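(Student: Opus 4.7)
The plan is to choose $s_1,\dots,s_r$ generically in $V_H:=H^0(\sO_Y(H)\otimes I_V)$, combining two ingredients. First, global generation forces the base locus of the linear system $|V_H|$ to equal $V$, so on the open set $U:=Y\setminus V$ one has a base-point-free linear system to which classical Bertini applies. Second, smoothness of $Y$ at $\eta_V$ and at the finite set $(\cup_i Z_i)\cap V$ lets one control the intersection scheme-theoretically near $V$ via a regular-system-of-parameters argument. Throughout, ``generic'' means lying in a non-empty Zariski-open subset of $V_H^r$, which automatically contains $k$-rational points.

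For part (i), I induct on $r$. Assuming $H_{s_1}\cap\cdots\cap H_{s_{r-1}}$ has pure codimension $r-1$, each irreducible component $Z$ of it has dimension $\dim Y-r+1>\dim V$, so $Z\not\subseteq V$. Since the base locus of $|V_H|$ is $V$, there is a section not vanishing at some point of $Z\cap U$; hence the condition $Z\not\subseteq H_{s_r}$ is Zariski-open and non-empty in $s_r$. For generic $s_r$ this holds for every such $Z$, and Krull's principal ideal theorem produces pure codimension $r$.

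For part (ii), I impose three further generic conditions, each a non-empty Zariski-open subset of $V_H^r$. (a) At the regular local ring $R=\sO_{Y,\eta_V}$ (regular of dimension $r$ by smoothness at $\eta_V$), global generation descends to a surjection of $k(V)$-vector spaces $V_H\otimes_k k(V)\twoheadrightarrow \mathfrak{m}_R/\mathfrak{m}_R^2$; for generic $s_1,\dots,s_r$ their images form a $k(V)$-basis, so $s_1,\dots,s_r$ is a regular system of parameters of $R$. This forces the multiplicity of $V$ in the cycle $H_{s_1}\cdots H_{s_r}$ to equal $1$, and the resulting decomposition $H_{s_1}\cdots H_{s_r}=V+W$ satisfies $V\not\subseteq\Supp W$. (b) At each closed point $y_0\in(\cup_i Z_i)\cap V\subseteq Y^{\sm}\cap V^{\sm}$, the analogous argument localized at $y_0$ gives that generically $(s_1,\dots,s_r)\sO_{Y,y_0}=I_{V,y_0}$, so $H_{s_1}\cap\cdots\cap H_{s_r}$ coincides scheme-theoretically with $V$ on a Zariski neighborhood of $y_0$; consequently $\Supp W$ avoids the finite set $(\cup_i Z_i)\cap V$. (c) For each $Z_i$: either $Z_i$ is a point (so properness is vacuous) or $Z_i\not\subseteq V$ (since $Z_i\cap V$ is finite), in which case $Z_i\cap U$ is dense in $Z_i$; iterating classical Bertini on the base-point-free linear system on $U$ yields that $H_{s_1}\cap\cdots\cap H_{s_r}\cap(Z_i\cap U)$ has pure codimension $r$ in $Z_i$. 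Combining (b) and (c), every component of $W\cap Z_i$ lies in $Z_i\cap U$ and has codimension $r$ in $Z_i$, which is the desired proper intersection.

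The main technical obstacle is that the linear system is not globally base-point-free: every $H_s$ contains $V$. The dichotomy above is designed precisely to bypass this: classical Bertini handles the components of the intersection living in $U$, while the local Nakayama argument (using smoothness of $Y$ and $V$) forces the intersection scheme to collapse onto $V$ near each of the finitely many bad points $(\cup_i Z_i)\cap V$. Once these two regimes are reconciled, a generic element of the intersection of the non-empty Zariski-open conditions coming from (i), (a), (b), (c) satisfies the proposition.
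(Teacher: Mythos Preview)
Your proposal is correct and follows essentially the same route as the paper. Both arguments split the problem into ``away from $V$'' (handled by classical Bertini on the base-point-free system on $U=Y\setminus V$) and ``near $V$'' (handled by a Nakayama-type argument showing that for generic $s_1,\dots,s_r$ the images generate $I_V$ locally, using smoothness of $Y$ and $V$ at the relevant points). The only cosmetic differences are: for part (i) you induct on the number of sections while the paper invokes Bertini on $Y\setminus V$ in one stroke, and for the claim $V\not\subseteq\Supp W$ you work at the generic point $\eta_V$ to get multiplicity $1$, whereas the paper picks a closed witness point $x_V\in Y^{\sm}\cap V^{\sm}$ and argues there. Neither variation changes the substance of the proof.
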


\proof
Set $H_i:=H_{s_i}$ for $i=1,\dots,r.$
Because $\sO_Y(H)\otimes I_V$ is generated by global sections and $s_1,\dots, s_r$ are general in $H^0(\sO_Y(H)\otimes I_V)$, 
$(H_1\setminus V)\cap\dots\cap  (H_r\setminus V)$ is a proper intersection in $Y\setminus V$, and 
$(H_1\setminus V)\cap\dots\cap  (H_r\setminus V)\cap (Z_i\setminus V)$ is proper intersection in $Y\setminus V$ for every $i=1,\dots,m$.

\medskip

As $V\subseteq H_i,$ the intersection 
$$H_1\cap \dots \cap H_r= V\cup ((H_1\setminus V)\cap\dots\cap  (H_r\setminus V))
$$
 is of pure codimension $r$ in $Y$.
So $H_1\cap \dots \cap H_r$ is a proper intersection. 
Write $H_{1}\cdots H_{r}=V+W$ where $W$ is an effective $(\dim Y-r)$-cycle.

\medskip
Pick $x_V\in Y^{\sm}(k)\cap V^{\sm}(k)$ and set $B:=(V\cap (\cup_{i=1}^m Z_i))\cup \{x_V\}$. It is a finite subset of $Y^{\sm}(k)\cap V^{\sm}(k).$
For every point $x\in B$, fix an isomorphism $\phi_x: (O(H)\otimes I_V)_x\to I_{V,x}.$  Because $x\in (V^{\sm}\cap Y^{\sm})(k)$, $I_{V,x}/ I_{V,x}I_x$ is a $k$-vector space of dimension $r.$ Because $\sO_Y(H)\otimes I_V$ is generated by global sections and $s_1, \dots, s_r$ are general in $H^0(\sO_Y(H)\otimes I_V)$, $\phi_x(s_1), \dots, \phi_x(s_r)$ is a $k$-basis of 
$I_{V,x}/ I_{V,x}I_x$. Then $\phi_x(s_1),\dots, \phi_x(s_r)$ generate $I_{V,x}.$ This implies that at every point $x\in B$, there is an open neighborhood $U_x$ of $x$ such that $H_1\cap \dots \cap H_r\cap U_x=V\cap U_x.$ 
So $V\not\subseteq \Supp \,W$, and $W\cap B=\emptyset.$
Because $V\cap (\cup_{i=1}^m Z_i)\subseteq B$ for every $i=1,\dots, m$, 
the intersection
$$W\cap Z_i=W\cap (Z_i\setminus V)=(H_1\setminus V)\cap\dots\cap  (H_r\setminus V)\cap (Z_i\setminus V)$$ is a proper intersection.
\endproof

\subsection{Proper part of an intersection}

\begin{lem}\label{lemcutampleeff}
Let $B$ be a smooth projective variety of dimension $d$ over an algebraically closed field $k$.
Let $H$ be a Cartier divisor on $B$ such that $\sO_B(H)$ is globally generated. 
Then for $\alpha\in \CH_i(B)_{\Q}$ with $\alpha\geq 0$, $\alpha\cdot [H] \geq 0.$
\end{lem}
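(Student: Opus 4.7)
The plan is to reduce to the case of a single irreducible subvariety and then use global generation to move representatives of $|H|$ off it. First I write $\alpha = \sum_j a_j [Z_j]$ with $a_j > 0$ rational and $Z_j$ irreducible of dimension $i$; by linearity and $\Q$-positivity it suffices to show that each product $[H] \cdot [Z_j]$ is represented by an effective $(i-1)$-cycle in $\CH_{i-1}(B)_\Q$.

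Fix $j$. Since $\sO_B(H)$ is globally generated, for any point $x \in Z_j$ there is a global section not vanishing at $x$, and in particular there is a section $s_j \in H^0(B, \sO_B(H))$ whose zero divisor $H_{s_j} \in |H|$ does not contain the generic point $\eta_{Z_j}$, and hence does not contain $Z_j$. The scheme-theoretic intersection $H_{s_j} \cap Z_j$ is then either empty or pure of dimension $i - 1$, and by the standard construction of the intersection of a Cartier divisor with a cycle in proper position — applicable because $B$ is smooth, so that $[H] \in \CH^1(B)_\Q$ is an honest divisor class operating on cycles of $B$ — its associated cycle class equals $[H] \cdot [Z_j]$ in $\CH_{i-1}(B)_\Q$. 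This representative is manifestly effective.

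Summing with positive rational weights gives $\alpha \cdot [H] = \sum_j a_j [H_{s_j} \cap Z_j] \geq 0$, which is the desired conclusion. There is essentially no obstacle: the only thing to verify is that, on the smooth variety $B$, a globally generated invertible sheaf always admits an effective divisor representative avoiding any prescribed subvariety of positive codimension, which is just the base-point-free property. Note that one may and does choose a different section $s_j$ for each $Z_j$; this is harmless since effectiveness is a property of the class, not of a particular cycle representative.
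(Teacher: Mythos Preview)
Your proof is correct and follows essentially the same approach as the paper's: reduce by linearity to a single integral subvariety $Z$, use global generation of $\sO_B(H)$ to choose a divisor in $|H|$ not containing $Z$, and conclude that the resulting proper intersection represents $[H]\cdot[Z]$ by an effective cycle. The only cosmetic difference is that the paper picks a single general member of $|H|$ after reducing to one $Z$, whereas you allow a different section $s_j$ for each component; as you note, this is immaterial.
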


\proof
We may write $\alpha=[Z]$ for an effective $i$-cycle $Z$. We can further assume that $Z$ is an integral subvariety of $B$. 
Because $\sO_B(H)$ is globally generated, after replacing $H$ by a general element in $|H|$, we may assume that $H\cap Z$ is a proper intersection.
Then $\alpha\cdot [H]=[Z]\cdot [H]\geq 0.$
\endproof

\medskip 

Let $B$ be a smooth projective variety of dimension $d$ over an algebraically closed field $k$.
Let $H_1,\dots,H_m$ be Cartier divisors of $B$. Let $X$ be a subvariety of $B.$
A \emph{proper component} $Z$ of $H_1\cap \dots \cap H_m\cap X$ is an irreducible component of the underling topological space of $H_1\cap \dots \cap H_m\cap X$ of dimension $\dim X-m.$ Write $m(Z, H_1\cap \dots \cap H_m\cap X)$ for the multiplicity of $Z$ in $H_1\cap \dots \cap H_m\cap X$, as defined by Serre using the tor-functor.
Define 
$$(H_1\cdots H_m\cdot X)^{\prop}:=\sum m(Z, H_1\cap \dots \cap H_m\cap X) [Z],$$
where the  sum takes over all proper components $Z$ of $H_1\cap \dots \cap H_m\cap X$.

\medskip

The following result is a generalization of \cite[Lemma 3.3]{Jia2021} with a similar proof.

\begin{pro}\label{propropintboundhyp}
Let $H_1, \dots, H_r$ be effective Cartier divisors on $B$ such that 
$\sO_B(H_1),\dots,\sO_B(H_r)$ are generated by global sections. 
Let $X$ be a subvariety of $B$.
Then we have 
$$(H_1\cdots H_r\cdot X)^{\prop}\leq H_1\cdots H_r\cdot X.$$
\end{pro}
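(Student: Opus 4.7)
The plan is to proceed by induction on $r$, with the guiding intuition that the gap $H_1\cdots H_r\cdot X - (H_1\cdots H_r\cdot X)^{\prop}$ records the contribution to the intersection class of the non-proper (excess-dimensional) components of $H_1\cap\cdots\cap H_r\cap X$, and such contributions should be effective because each $\sO_B(H_i)$ is globally generated, hence nef and movable. The base case $r=1$ splits into a dichotomy: if $X\subseteq H_1$, then $(H_1\cdot X)^{\prop}=0$ and $[H_1]\cdot[X]\geq 0$ by Lemma~\ref{lemcutampleeff}; if $X\not\subseteq H_1$, the intersection is already proper and both sides coincide.

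For the inductive step, I would first apply the induction hypothesis to $H_2,\ldots,H_r$ to decompose $H_2\cdots H_r\cdot X = W+E$ with $W=(H_2\cdots H_r\cdot X)^{\prop}=\sum_j n_j[Z_j]$ supported on the $(\dim X - r + 1)$-dimensional proper components $Z_j$, and $E\geq 0$ effective. Multiplying by $[H_1]$ and applying Lemma~\ref{lemcutampleeff} to get $[H_1]\cdot E\geq 0$ reduces the task to bounding $(H_1\cdots H_r\cdot X)^{\prop}$ from above by $[H_1]\cdot W=\sum_j n_j[H_1]\cdot[Z_j]$; the base case applied to each $Z_j$ in place of $X$ then gives $[H_1]\cdot[Z_j]\geq (H_1\cdot Z_j)^{\prop}$, so it remains to show the cycle-level inequality $(H_1\cdots H_r\cdot X)^{\prop}\leq \sum_j n_j(H_1\cdot Z_j)^{\prop}$.

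A dimension count settles the support inclusion: any proper component $Z$ of $H_1\cap\cdots\cap H_r\cap X$ has dimension $\dim X-r$; were $Z$ contained in a non-proper component $Y$ of $H_2\cap\cdots\cap H_r\cap X$ (so $\dim Y > \dim X - r + 1$), then $Z$ would be an irreducible component of $H_1\cap Y$ of dimension at least $\dim Y - 1 > \dim X - r$ by Krull's principal ideal theorem, a contradiction. So $Z$ lies in some proper $Z_j$ and is a proper component of $H_1\cap Z_j$. The multiplicity comparison $m(Z,H_1\cap\cdots\cap H_r\cap X)\leq \sum_j n_j\cdot m(Z,H_1\cap Z_j)$ is the main obstacle I foresee; it should follow from the associativity of intersection multiplicities when slicing by Cartier divisors one at a time, reflecting the Koszul complex computation in the smooth ambient $B$. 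Should this step prove technically delicate, a cleaner alternative is to imitate \cite[Lemma~3.3]{Jia2021} directly: use global generation of each $\sO_B(H_i)$ to move $H_i$ to a general element $H_i'\in|H_i|$, represent the class $[H_1]\cdots[H_r]\cdot[X]$ by the effective proper intersection cycle of the $H_i'$, and invoke semicontinuity of intersection multiplicities along the moving family to bound $(H_1\cdots H_r\cdot X)^{\prop}$ term-by-term against this representative.
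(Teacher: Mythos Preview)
Your proposal is correct and follows the same inductive strategy as the paper: peel off one divisor, apply the induction hypothesis to bound the proper part of the remaining intersection, then intersect with the last divisor and absorb the excess via Lemma~\ref{lemcutampleeff}. The multiplicity comparison you flag as the ``main obstacle'' is handled exactly as you suggest---your dimension count already forces $H_2\cap\cdots\cap H_r\cap X$ to be locally proper at each $\eta_Z$, so the associativity of intersection multiplicities gives the identity $m(Z,H_1\cap\cdots\cap H_r\cap X)=\sum_j n_j\, m(Z,H_1\cap Z_j)$ directly, which is precisely what the paper uses.
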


\proof 
Let $V_1,\dots, V_m$ be the proper components of $H_1\cap \dots \cap H_r\cap X$ with multiplicities $m_1,\dots,m_r.$
For $i=1,\dots,m,$ set $\eta_i:=\eta_{V_i}$. Then $H_1\cap \dots \cap H_r\cap X$ has proper intersection at $\eta_1,\dots, \eta_m.$

Let $X_1,\dots, X_l$ be all irreducible components of $H_1\cap\dots\cap H_{r-1}\cap X$ passing through $\eta_1,\dots, \eta_m.$
For  $j=1,\dots,l$, the variety $X_j$ has dimension $\dim X-r+1$ and $X_j\not\subseteq H_r$
Assume that $X_j$ has multiplicity $n_j$ in $H_1\cap\dots\cap H_{r-1}\cap X$.

If $r=1,$ this lemma is trivial.  Now assume that $r\geq 2$. 
By induction,
 $$\sum_{j=1}^l n_jX_j\leq H_1\cdots H_{r-1}\cdot X.$$
The previous paragraph shows that $(\sum_{j=1}^l n_jX_j)\cap H_r$ is a proper intersection and $m_i$ is the multiplicity of $V_i$ in $(\sum_{i=1}^l n_jX_j)\cap H_r.$
By Lemma \ref{lemcutampleeff}, we have 
$$(H_1\cdots H_r\cdot X)^{\prop}=\sum_{i=1}^m m_iV_i\leq (\sum_{j=1}^l n_jX_j)\cap H_r\leq H_1\cdots H_{r-1}\cdot V\cdot H_r.$$
\endproof

\subsection{Strict transform}
Let $f:X\to Y$ be a surjective morphism of projective varieties over an algebraically closed field $k.$ 
Set $e:=\dim X-\dim Y$.

For every integer $l\geq e$,   
$$Y_l:=\{y\in Y|\,\, \dim f^{-1}(y)\geq l\}$$ 
is a closed subset of $Y$. 
We have $Y_{l+1}\subseteq Y_l$.
We note that $Y_{e}=Y$ and for $l\geq e+1,$
$\dim Y_l+l\leq \dim X-1$. In particular, for $l\geq \dim X$, $Y_l=\emptyset.$

\begin{lem}\label{lempullbackproper}
Let $W$ be a subvariety of $Y$. 
Assume that $W\cap Y_l$ is a proper intersection for every $l\geq e+1$.
Then 
$$\dim f^{-1}(W)=\dim W+e.$$
Moreover, for every irreducible component $Z$ of $f^{-1}(W)$ with $\dim Z=\dim f^{-1}(W)$, we have $f(Z)=W.$
\end{lem}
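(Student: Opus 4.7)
My plan is to analyze an arbitrary irreducible component $Z$ of $f^{-1}(W)$ through its image and its generic fiber dimension. Since $X$ and $Y$ are projective, $f$ is proper, so $V := f(Z)$ is a closed irreducible subvariety of $W$. Let $l_0 := \dim Z - \dim V$ denote the dimension of the generic fiber of the dominant morphism $f|_Z : Z \to V$, and let $l_V := \max\{l : V \subseteq Y_l\}$, which exists and is at least $e$ since $Y_e = Y$. For generic $v \in V$, upper semicontinuity gives $\dim f^{-1}(v) = l_V$, and since $f|_Z^{-1}(v) \subseteq f^{-1}(v)$ we obtain $l_0 \leq l_V$, hence $\dim Z = \dim V + l_0 \leq \dim V + l_V$.

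The upper bound $\dim f^{-1}(W) \leq \dim W + e$ and the second assertion will then follow by splitting into two cases. If $l_V = e$, then $\dim Z \leq \dim V + e \leq \dim W + e$, with equality forcing $\dim V = \dim W$; since $V \subseteq W$ with both irreducible, this forces $V = W$. If $l_V \geq e+1$, then $V \subseteq W \cap Y_{l_V}$, and the proper intersection hypothesis combined with $\dim Y_{l_V} \leq \dim X - 1 - l_V = \dim Y + e - 1 - l_V$ gives
\[
\dim V \leq \dim(W \cap Y_{l_V}) = \dim W + \dim Y_{l_V} - \dim Y \leq \dim W + e - 1 - l_V,
\]
so $\dim Z \leq \dim V + l_V \leq \dim W + e - 1$, strictly less than $\dim W + e$. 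Thus every component $Z$ satisfies $\dim Z \leq \dim W + e$, and any component attaining this bound must satisfy $f(Z) = W$.

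To produce a component of maximal dimension, I will exploit that $W \not\subseteq Y_{e+1}$: this follows because $\codim_Y Y_{e+1} \geq 2$ together with the proper intersection hypothesis forces $\dim(W \cap Y_{e+1}) \leq \dim W - 2 < \dim W$. Hence $\eta_W \notin Y_{e+1}$, and since $Y_e = Y$, we get $\dim f^{-1}(\eta_W) = e$. Picking an irreducible component $F$ of $f^{-1}(\eta_W)$ with $\dim F = e$, its closure $\overline{F}$ in $X$ is irreducible, contained in $f^{-1}(W)$, and maps dominantly onto $W$ with generic fiber $F$, so $\dim \overline{F} = \dim W + e$. Any component of $f^{-1}(W)$ containing $\overline F$ then has dimension exactly $\dim W + e$, matching the upper bound.

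The only nontrivial piece of bookkeeping is the numerical estimate $\dim(W \cap Y_l) \leq \dim W + e - 1 - l$ for $l \geq e+1$, extracted from the proper intersection assumption and the standing inequality $\dim Y_l + l \leq \dim X - 1$; this is precisely what rules out the regime $l_V \geq e+1$ when $\dim Z$ achieves $\dim W + e$. Apart from this, the argument is a routine application of the fiber dimension theorem and upper semicontinuity of fibers.
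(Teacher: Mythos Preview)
Your argument is correct and follows essentially the same approach as the paper. The paper stratifies $W$ by the loci $Y_l\setminus Y_{l+1}$ and bounds $\dim f^{-1}(W\cap(Y_l\setminus Y_{l+1}))\le \dim W+e-1$ for $l\ge e+1$, while you instead take an irreducible component $Z$ and analyse its image $V$ via the level $l_V$; both routes rest on the identical numerical estimate $\dim(W\cap Y_l)\le \dim W+e-1-l$, and your treatment of the second assertion is just a more explicit unpacking of the paper's terse line ``$\dim f(Z)\ge \dim Z-e$''.
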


\proof
Write $W=\sqcup_{e\leq l\leq \dim X} W\cap(Y_l\setminus Y_{l+1}).$
For every $l\geq e+1$, if $W\cap(Y_l\setminus Y_{l+1})\neq \emptyset,$
$$\dim W\cap(Y_l\setminus Y_{l+1})\leq \dim W\cap Y_l=\dim W+\dim Y_l-\dim Y$$
$$\leq \dim W+\dim X-l-1-\dim Y=\dim W+e-l-1.$$
So for $l\geq e+1$, if $W\cap(Y_l\setminus Y_{l+1})\neq \emptyset,$ 
$$\dim f^{-1}(W\cap (Y_l\setminus Y_{l+1}))\leq \dim W+e-1.$$

\medskip

Because $W\cap Y_{e+1}$ is a proper intersection, we have $W\setminus Y_{e+1}\neq \emptyset.$
Then $$\dim f^{-1}(W\setminus Y_{e+1})=\dim W+e.$$
So we have $$\dim f^{-1}(W)=\dim W+e.$$
Let $Z$ be an irreducible component of $f^{-1}(W)$ of $\dim Z=\dim f^{-1}(W)=\dim W+e$.
Then $\dim f(Z)\geq \dim Z-e= \dim W.$ So $f(Z)=W.$
\endproof

\proof[Proof of Proposition \ref{prostrleqtot}]
For every $i=1,\dots,n$, since $f(Z_i)=V$ and $Z_i$ is irreducible, 
$\dim Z_i=\dim (Z_i\setminus f^{-1}(Y_{e+1})).$
Since $f^{-1}(V\setminus Y_{e+1})$ is of pure dimension $e+\dim V$,  $\dim Z_i=e+\dim V.$ 

Set $r:=\dim Y-\dim V.$ 
By Proposition \ref{probertini},  there are effective and very ample divisors $H_1,\dots, H_r$ on $Y$ such that 
$H_1\cap \dots \cap H_r$ is a proper intersection and 
$$H_1\cdots H_r=V+W$$
where $W$ is an effective $(\dim Y-r)$-cycle such that 
$V\not\subseteq \Supp \,W$, and such that $W\cap Y_l$ is a proper intersection
for $l\geq e+1$.
We have $$g^*[V]+g^*[W]=[g^*H_1\cdots g^*H_r].$$

By Lemma \ref{lempullbackproper}, $\dim f^{-1}W=\dim V+e.$  
Then $f^{-1}W=g^{-1}W \cap X$ is a proper intersection. Since $B$ is smooth, $f^{-1}W$ is equidimensional. 
By Lemma \ref{lempullbackproper} again, for every irreducible component $R$ of $f^{-1}W$, the image $f(R)$ is an irreducible component of $W.$

We claim that $[f^{-1}W]=g^*[W]\cdot [X]$ as algebraic cycles over $B$. 
In fact, it suffices to prove the restriction of the equality to $U=B\setminus g^{-1}(V)$. Over $Y\setminus V$, $W$ is the complete intersection $H_1\cap \dots \cap H_r$. 
Then the result follows by the fact that the intersection multiplicities are given by length of the local rings. 
This can be obtained by the vanishing of the higher tor-functors in Serre's intersection formula, or as an example of \cite[Proposition 7.1 (b)]{Fulton1984}. 

Similarly, we have the Zariski closure
$$[\overline{f^{-1}(V)\cap f^{-1}(Y\setminus Y_{e+1})}]
=\sum_{i=1}^n m_i[Z_i]$$
as algebraic cycles over $B$. 
The sum gives 
$$[\overline{f^{-1}(H_1\cap \dots \cap H_r)\cap f^{-1}(Y\setminus Y_{e+1})}]=\sum_{i=1}^n m_i[Z_i]+g^*[W]\cdot [X].$$

Note that every irreducible component of 
$f^{-1}(H_1\cap \dots \cap H_r)\cap f^{-1}(Y\setminus Y_{e+1})$ has dimension $\dim V+e.$
So we have 
$$[\overline{f^{-1}(H_1\cap \dots \cap H_r)\cap f^{-1}(Y\setminus Y_{e+1})}]\leq (g^*H_1\cdots g^*H_r\cdot X)^{\prop}$$
in $\CH_{\dim V+e}(B)_\Q$. 

Finally, by Proposition \ref{propropintboundhyp}, since $\sO_{B}(f^*H_i)$ for $ i=1,\dots,r$ are generated by global sections, 
we get 
$$(g^*H_1\cdots g^*H_r\cdot X)^{\prop}\leq g^*H_1\cdots g^*H_r\cdot [X].$$
It follows that
$$\sum_{i=1}^n m_i[Z_i]+g^*[W]\cdot [X]\leq g^*H_1\cdots g^*H_r\cdot [X]=g^*[V]\cdot X+g^*[W]\cdot [X].$$
This concludes the proof.
\endproof

\section{Lowering the transcendence degree}\label{sec trdeg}

The geometric Bogomolov conjecture concerns a finitely generated extension $K/k$.
The goal of this section is to lower $\trdeg(K/k)$ to 1 in the conjecture.
The main result of this section is as follows. 

\begin{pro}\label{changefield} 
Let $k$ be an algebraically closed field.
Let $K/k$ be a finitely generated field extension of transcendence degree at least 2.
Let $A$ be an abelian variety over $K$. 
Then there are two intermediate fields $k_1,k_2$ of $K/k$, algebraically closed in $K$ and of transcendence degree $1$ over $k$, such that for any closed subvariety $X$ of $A_{\overline{K}}$,  the geometric Bogomolov conjecture holds for $(A/K/k, X)$ if the geometric Bogomolov conjecture holds for $(A_{K\overline k_1}/K\overline k_1/\overline k_1,X)$ and $(A_{K\overline k_2}/K\overline k_2/\overline k_2,X)$.
\end{pro}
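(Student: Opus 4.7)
The plan is to construct two intermediate fields $k_1,k_2$ from two sufficiently generic pencils of hyperplane sections on a projective model of $K/k$, and then to establish (i) that density of small points passes from $A/K/k$ to each $A_{K\ok_i}/K\ok_i/\ok_i$, via an intersection-theoretic comparison of naive heights, and (ii) that specialness descends: if $X$ is special with respect to both $k_1$ and $k_2$, then $X$ is special with respect to $k$. Step (ii) is the main technical obstacle.

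For the construction, fix a polarization $(S,\sM)$ of $K/k$ with $\sM$ very ample, embedding $S\hookrightarrow\P^N_k$, and set $d=\trdeg(K/k)\geq 2$. Choose two generic linear pencils $\Pi_1,\Pi_2\subset|\sM|$, each a $\P^1_k$-family of hyperplane sections. By Bertini, the generic member $H_i$ of $\Pi_i$ is geometrically integral and normal, sitting over $k_i':=k(\Pi_i)\cong k(\P^1_k)$ as a $(d-1)$-dimensional projective variety with function field $K$; thus $(H_i,\sM|_{H_i})$ is a polarization of $K/k_i'$. Let $k_i$ be the algebraic closure of $k_i'$ in $K$: a finite extension of $k_i'$, algebraically closed in $K$, of transcendence degree $1$ over $k$. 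We choose the pencils generically enough that $\ok_1\cap\ok_2=\ok$ inside $\OK$. For the height transfer, pick an integral model $(\sA,\pi,\sL)$ of $A/K$ over $S$. Using $[H_i]=c_1(\sM)$ on $S$ and base-changing to $H_i$ gives
\[\sL^{\dim X+1}\cdot(\pi^*\sM)^{d-1}\cdot\sX=(\sL|_{\sA_{H_i}})^{\dim X+1}\cdot(\pi_{H_i}^*(\sM|_{H_i}))^{d-2}\cdot\sX|_{\sA_{H_i}},\]
where $\sA_{H_i}:=\sA\times_S H_i$. Hence the naive heights of $X$ with respect to $(S,\sM)/k$ and $(H_i,\sM|_{H_i})/k_i'$ agree, and so do their Tate limits. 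Since $k_i'\subset k_i$ is finite and $k_i\subset\ok_i$ is algebraic, canonical heights change only by bounded positive factors; so dense small points for $A/K/k$ yields dense small points for $A_{K\ok_i}/K\ok_i/\ok_i$, and the assumed conjecture gives, for each $i=1,2$, a decomposition $X=\tr_i(Y_i\otimes_{\ok_i}\OK)+T_i$ with $Y_i\subset A^{\OK/\ok_i}$ and $T_i$ a torsion subvariety of $A_\OK$.

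The descent is the crux. Set $B_i:=\tr_i(A^{\OK/\ok_i}\otimes\OK)$ and $B_0:=\tr_0(A^{\OK/\ok}\otimes\OK)$, all abelian subvarieties of $A_\OK$. The generic assumption $\ok_1\cap\ok_2=\ok$ combined with the universal property of the trace should give $B_1\cap B_2=B_0$ (up to purely inseparable factors in characteristic $p$), hence an injection $A_\OK/B_0\hookrightarrow(A_\OK/B_1)\times(A_\OK/B_2)$. Specialness of $X$ with respect to $k_i$ makes the image of $X$ in $A_\OK/B_i$ a torsion subvariety, so the image in $A_\OK/B_0$ lies in a product of torsion subvarieties, hence in a torsion subvariety. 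Converting this into a genuine decomposition $X=\tr_0(Y)+T$ with $Y\subset A^{\OK/\ok}$ and $T$ torsion is the hardest step: it requires comparing the two decompositions $\tr_i(Y_i)+T_i$ fiberwise over the image of $X$ in $A_\OK/B_0$, using the stabilizer of $X$, and analysing how the "new" traces $B_i/B_0$ from the two fields interact. Verifying the generic conditions $\ok_1\cap\ok_2=\ok$ and $B_1\cap B_2=B_0$, and carrying out this simultaneous descent, constitute the main work of the section.
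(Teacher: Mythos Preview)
Your outline parallels the paper's overall structure, but there are two points of divergence worth flagging.

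\textbf{Height comparison.} Your claimed \emph{equality} of naive heights is not correct as stated: the right-hand side of your displayed formula uses the restriction $\sX|_{\sA_{H_i}}=\sX\times_S H_i$, but the naive height over $(H_i,\sM|_{H_i})$ is computed with the Zariski closure $\sX_{H_i}^*$ of $X$ inside $\sA_{H_i}$, which is in general only one irreducible component of $\sX\times_S H_i$. The paper therefore obtains only the \emph{inequality} $\hat h_L^{\sM}(X)\geq\hat h_L^{\sM|_{H_i}}(X)$. This does not harm the application (small points still transfer), but your justification is off.

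\textbf{Descent of specialness.} Here the paper takes a genuinely different and cleaner route than your sketch. You attempt to compare the two decompositions $X=\tr_i(Y_i\otimes\OK)+T_i$ directly via $B_1\cap B_2=B_0$, but as you yourself note, this only shows that the image of $X$ in $A_\OK/B_0$ lies in a torsion subvariety, which is far from enough: a subvariety of $B_0$ not defined over $\ok$ has trivial image in $A_\OK/B_0$ yet is not special for $k$. Controlling the ``$Y$-part'' by your method is the real difficulty, and your proposal does not resolve it. The paper avoids this head-on comparison entirely. It proves an abstract statement (Proposition~\ref{lemnonspecial}): there is a unique minimal $k_{A,X}\in I(K/k)$ such that $X$ is special for $A/K/k'$ if and only if $k_{A,X}\subseteq k'$. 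The proof proceeds by first quotienting by $\Stab^0(X)$ and then passing to the minimal torsion coset containing $X$; after these reductions the question collapses to one of two clean field-of-definition problems---either ``is $A$ defined over $k'$ up to isogeny?'' or ``is $X\subset A$ defined over $k'$ as a subvariety?''---which are handled by Proposition~\ref{lembasechangediagram} and Lemma~\ref{lemsubvardef} respectively. With $k_{A,X}$ in hand, the conclusion is immediate: if $X$ is special for both $k_1$ and $k_2$ then $k_{A,X}\subseteq k_1\cap k_2=k$, so $X$ is special for $k$. Your condition $\ok_1\cap\ok_2=\ok$ is exactly what is needed, but the paper's reduction is what makes the descent actually go through.
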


Note that the intermediate fields $k_1,k_2$ do not depend on the subvariety $X$.
Applying the theorem repeatedly, we reduce the geometric Bogomolov conjecture to the case $\trdeg(K/k)=1$.

\subsection{Field of definition}
For two abelian schemes $A_1,A_2$ over a base scheme,  denote $A_1\sim A_2$ if $A_1$ is isogenous to $A_2.$
For an abelian variety over a field $K$ and a subfield $k$ of $K$, we say $A$ is defined over $k$ up to isogeny if there is an abelian variety $A'$ over $k$ such that $A\sim A'\otimes_kK.$

\begin{pro}\label{lembasechangediagram}
Let $k$ be an algebraically closed field. Let $K$ be a field extension of $k$ with $\trdeg(K/k)<\infty.$
Let $k_1,k_2$ be algebraically closed intermediate fields of $K/k$ with $k_1\cap k_2=k.$
Let $A_1,A_2$ be abelian varieties over $k_1,k_2$ respectively. 
 If $A_1\otimes_{k_1}K\sim A_2\otimes_{k_2}K$, then there is an abelian variety
$A$ over $k$, such that $A_1\sim A\otimes_{k}k_1$ and $A_2\sim A\otimes_{k}k_2.$
Moreover, the abelian variety $A$ is unique up to isogeny. 
\end{pro}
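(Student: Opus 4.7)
The plan is to define $A$ as the $K/k$-trace of $A_1\otimes_{k_1}K$ and to derive all claims via transitivity of the trace and Poincar\'e's complete reducibility. Concretely, set $A:=(A_1\otimes_{k_1}K)^{K/k}$; by functoriality of the trace under isogeny and the hypothesis, $A\sim (A_2\otimes_{k_2}K)^{K/k}$. For each $i=1,2$, the tower $k\subseteq k_i\subseteq K$ gives the transitivity formula $B^{K/k}\sim (B^{K/k_i})^{k_i/k}$ for any abelian variety $B$ over $K$; applied to $B=A_i\otimes_{k_i}K$ and using that $k_i$ is algebraically closed in $K$ (so $(A_i\otimes_{k_i}K)^{K/k_i}\sim A_i$), we obtain $A\sim A_i^{k_i/k}$ for $i=1,2$. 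Uniqueness of $A$ up to isogeny is then immediate: any $A'$ over $k$ satisfying $A'\otimes_k k_i\sim A_i$ must have $(A'\otimes_k k_i)^{k_i/k}\sim A'$ (since $k$ is algebraically closed in $k_i$), hence $A'\sim A_i^{k_i/k}\sim A$.

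For existence, Poincar\'e reducibility over $k_i$ provides a decomposition
$$A_i\sim (A\otimes_k k_i)\times A_i^{\mathrm{new}},$$
where $A_i^{\mathrm{new}}$, the complement of the image of the trace map $A_i^{k_i/k}\otimes_k k_i\to A_i$, has trivial $k_i/k$-trace. Base-changing to $K$, the factor $A\otimes_k K$ is the constant (trace) component of $A_i\otimes_{k_i}K$, while $A_i^{\mathrm{new}}\otimes_{k_i}K$ has trivial $K/k$-trace by transitivity applied to $A_i^{\mathrm{new}}$. The hypothesized isogeny $A_1\otimes_{k_1}K\sim A_2\otimes_{k_2}K$ respects the Poincar\'e decomposition into constant and trivial-trace parts (this decomposition being unique up to isogeny), and therefore yields
$$A_1^{\mathrm{new}}\otimes_{k_1}K\sim A_2^{\mathrm{new}}\otimes_{k_2}K;$$
call this common isogeny class $B$, an abelian variety over $K$ with trivial $K/k$-trace that is simultaneously base-changed (up to isogeny) from $k_1$ and from $k_2$.

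The main obstacle is to prove $B\sim 0$; this will force $A_i^{\mathrm{new}}\sim 0$ and hence $A_i\sim A\otimes_k k_i$, completing the existence. Heuristically, $B$ is \emph{defined over both $k_1$ and $k_2$} up to isogeny, and since $k_1$ and $k_2$ are linearly disjoint over $k=k_1\cap k_2$ inside $K$ (a consequence of $k_1\cap k_2=k$ with both $k_1,k_2$ algebraically closed in $K$), the isogeny class of $B$ ought to descend to $k$; combined with the triviality of its $K/k$-trace, this descent collapses $B$ to zero. To make this rigorous, the plan is to fix compatible polarizations on $A_1^{\mathrm{new}}, A_2^{\mathrm{new}}$ matched via the given isogeny, pass to a fine moduli space $\mathcal{A}_{g,d,N}$ of polarized abelian varieties equipped with a level-$N$ structure (for $N$ prime to the characteristic), observe that the $k_i$-points representing the $A_i^{\mathrm{new}}$ map to a common $K$-point, and invoke linear disjointness of $k_1,k_2$ over $k$ together with rigidity of morphisms of abelian varieties to produce a $k$-point. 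The corresponding abelian variety over $k$ is isogenous to $B$, whose triviality as the $K/k$-trace of $B$ completes the argument.
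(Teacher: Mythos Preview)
Your reduction via the $K/k$-trace and Poincar\'e reducibility is correct, but notice that it does no real work: the statement you arrive at --- $A_i^{\mathrm{new}}$ over $k_i$ with trivial $k_i/k$-trace, $A_1^{\mathrm{new}}\otimes_{k_1}K\sim A_2^{\mathrm{new}}\otimes_{k_2}K$, and you want $A_i^{\mathrm{new}}=0$ --- is just the original proposition restricted to the trace-free case, which is equivalent to the general case. So everything hinges on the moduli argument, and that argument has a genuine gap. The hypothesis gives only an \emph{isogeny} over $K$, not an isomorphism, so there is no reason for the polarized, level-rigidified $A_i^{\mathrm{new}}$ to define the same $K$-point of $\mathcal{A}_{g,d,N}$; ``matching polarizations via the isogeny'' does not produce isomorphic polarized abelian varieties. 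One might try to replace $A_2^{\mathrm{new}}$ by an isogenous abelian variety over $k_2$ so that the base changes to $K$ become isomorphic, but this already requires descending the kernel of the $K$-isogeny to $k_2$, which fails in positive characteristic (subgroup schemes of, say, $\alpha_p^2\otimes_{k_2}L$ are generally not defined over $k_2$). A secondary issue: $k_1\cap k_2=k$ with $k_1,k_2$ algebraically closed does not formally imply linear disjointness over $k$, and this would need its own argument.

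The paper's proof avoids moduli entirely and instead spreads out and specializes. One chooses $k$-varieties $S_i$ with $\overline{k(S_i)}=k_i$ carrying abelian schemes $\sA_i\to S_i$ with geometric generic fibers $A_i$, spreads the given isogeny to $\Phi_U:\sA_{1,U}\to\sA_{2,U}$ over some $U$ quasi-finite over $S_1\times S_2$, picks a closed point $(s_1,s_2)$ in the image of $U$, and restricts $\Phi_U$ over a component $V$ of the preimage of $S_1\times\{s_2\}$. On $V$ the second abelian scheme becomes the constant family $\sA_{2,s_2}\times V$ with $\sA_{2,s_2}$ defined over $k$; taking geometric generic fibers over $S_1$ yields $A_1\sim\sA_{2,s_2}\otimes_k k_1$. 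This specialization step is precisely the idea missing from your sketch.
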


For the uniqueness in the proposition, we have the following result. 

\begin{lem}\label{lemisobasechange}
Let $A_1,A_2$ be two abelian varieties over an algebraically closed field $k$.  Let $K$ be any field extension of $k$. If $A_1\otimes_kK\sim A_2\otimes_kK$, then $A_1\sim A_2.$
\end{lem}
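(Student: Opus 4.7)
The plan is to descend an isogeny $\phi : A_{1,K} \to A_{2,K}$ to an isogeny defined over $k$ by a spreading-out argument. First, I would reduce to the case that $K/k$ is finitely generated. Writing $K$ as the filtered colimit of its finitely generated sub-extensions $K_0 \supset k$, one has
$$\Hom_K(A_{1,K},A_{2,K}) = \varinjlim_{K_0} \Hom_{K_0}(A_{1,K_0},A_{2,K_0})$$
because $A_1$ and $A_2$ are already defined over $k$ and the relevant $\Hom$-scheme is locally of finite presentation. Hence $\phi$ descends to some finitely generated $K_0/k$, and we may replace $K$ by $K_0$. Write $K = k(S)$ for an irreducible smooth variety $S$ over $k$.

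Next, spread $\phi$ to a homomorphism $\Phi : A_{1,U} \to A_{2,U}$ of abelian schemes over some non-empty open $U \subseteq S$, where $A_{i,U} = A_i \times_k U$. The locus in $U$ where the fiber $\Phi_s : A_1 \to A_2$ is an isogeny is open: the kernel $\ker\Phi$ is proper over $U$ with fiber dimension upper semicontinuous, and $\dim A_1 = \dim A_2$ is forced by $A_{1,K} \sim A_{2,K}$; so the condition ``$\ker \Phi_s$ is finite'' is open and equivalent to ``$\Phi_s$ is an isogeny''. After shrinking $U$, every fiber of $\Phi$ is an isogeny.

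Finally, since $k$ is algebraically closed and $U$ is a non-empty variety over $k$, there exists $s \in U(k)$. The fiber $\Phi_s : A_1 \to A_2$ is then an isogeny over $k$, yielding $A_1 \sim A_2$.

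The main potential obstacle is verifying the openness of the isogeny locus in a family of abelian schemes, but this is a standard consequence of properness of the kernel and upper semicontinuity of fiber dimension. Alternatively, one could bypass the spreading-out step and directly invoke Chow's theorem on homomorphisms between abelian varieties: when $k$ is algebraically closed, the base change $\Hom_k(A_1,A_2) \to \Hom_K(A_{1,K},A_{2,K})$ is bijective because $K/k$ is a regular extension; then faithfully flat descent along $k \to K$ shows the descended morphism $\phi_0 : A_1 \to A_2$ is again an isogeny, since finiteness, flatness, and surjectivity all descend.
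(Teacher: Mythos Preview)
Your proof is correct and follows essentially the same spreading-out and specialization argument as the paper: reduce to $K/k$ finitely generated, write $K=k(S)$, spread the isogeny to a family over an open $U\subset S$, and specialize at a $k$-point of $U$. The paper is terser about why one may assume every fiber of $\Phi$ is an isogeny, whereas you spell out the openness of the isogeny locus; your alternative via Chow's rigidity theorem is also valid and slightly slicker.
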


\proof
There is an isogeny  $\Phi: A_1\otimes_kK\to A_2\otimes_kK$ over $K.$ There is a subfield $K'$ of $K$, finite generated over $k$, such that $\Phi$ is defined over $K'$. After replacing $K$ by $K'$, we may assume that $K$ is a finitely generated extension over $k$.
There is a $k$-variety $S$ such that $K=k(S).$ After shrinking $S$, we may assume that there is an isogeny $\Phi_S: A_1\times_k S\to A_2\times_k S$ over $S$ such that $\Phi$ is the generic fiber of $\Phi_S.$ Pick a point $b\in S(k)$. The restriction of $\Phi_S$ to the fiber at $b$ induces an isogeny $\Phi_b: A_1\to A_2$, which concludes the proof.
\endproof

\proof[Proof of Propositon \ref{lembasechangediagram}]  
We have noted that Lemma \ref{lemisobasechange} implies the uniqueness of $A.$
For the existence, we only need to show that both $A_1$ and $A_2$ are defined over $k$ up to isogeny.
Indeed, if $A_1\sim A_1'\otimes_kk_1$ and  $A_2\sim A_2'\otimes_kk_2$ for abelian varieties $A_1',A_2'$ over $k$, then $A_1'\otimes_kK\sim A_2'\otimes_kK,$
which implies $A_1'\sim A_2'$ by Lemma \ref{lemisobasechange} again.

Now we prove that $A_1$ and $A_2$ are defined over $k$ up to isogeny.
By Lemma \ref{lemisobasechange}, there is an isogeny 
$A_1\otimes_{k_1}K\sim A_2\otimes_{k_2}K$ defined over the algebraic closure of $k_1k_2$, and thus defined over a finite extension of $k_1k_2$. 
Thus we may assume that $K/k_1k_2$ is finite.

For $i=1,2$, there is a $k$-variety $S_i$, an abelian scheme $\pi:\sA\to S_i$ such that $\overline{k(S_i)}=k_i$ and $A_i\to \Spec k_i$ is the geometric generic fiber of $\pi:\sA\to S_i.$

Because $A_1\otimes_{k_1}K\sim A_2\otimes_{k_2}K$, there is $k$-variety $U$ satisfying $k(U)=K$, a flat and quasi-finite morphism $\psi: U\to S_1\times S_2$, and an isogeny
$$\Phi_U:\sA_{1,U}=\sA_1\times_{S_1} U\longrightarrow \sA_{2,U}=\sA_2\times_{S_2} U.$$
Pick a point $s=(s_1,s_2)\in \psi(U)(k)\subseteq S_1\times S_2.$ 
Consider $S_1\times s_2 \subseteq S_1\times S_2.$
Pick an irreducible component $V$  of $\psi^{-1}(S_1\times s_2)\subseteq U.$
Then $\Phi_U$ induces an isogeny 
$$\Phi_V=\Phi_U\times_UV: \sA_{1,U}\times_UV\longrightarrow \sA_{2,U}\times_UV.$$
The isomorphism $S_1\simeq S_1\times s_2$ induces an isomorphism 
\begin{equation}\label{equDlu}\sA_{1,U}\times_UV\simeq \sA_1\times _{S_1} V.\end{equation}
On the other hand, 
\begin{equation}\label{equDmu}
\sA_{2,U}\times_UV\simeq \sA_{2,s_2}\times V,\end{equation}
where $\sA_{2,s_2}:=\sA_2\times_{S_2}s_2$ is an abelian variety over $k.$
Because $k(V)/k(S_1)$ is finite, taking the geometric generic fibers in \eqref{equDlu} and  \eqref{equDmu}, we get $$A_1\sim \sA_{s_2}\otimes_kk_1.$$
Thus $A_1$ is defined over $k$ up to isogeny.
By symmetry, $A_2$ is defined over $k$ up to isogeny.
This concludes the proof.
\endproof

\medskip

\begin{lem}\label{lemsubvardef}
Let $k_1, k_2$ be two intermediate fields of a field extension $K/k$ such that  $k_1\cap k_2=k$.
Let $Y$ be a variety over $k$. Let $X$ be a closed subvariety of $Y_K=Y\otimes_kK$ over $K$.
Assume that as a subvariety of $Y_K$, $X$ is defined over $k_1$ and also defined over $k_2$.
Then $X$ is defined over $k.$
\end{lem}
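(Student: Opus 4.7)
The plan is to reduce the statement to a descent result for subspaces under base change: if $V$ is a $k$-vector space and $W \subset V \otimes_k K$ is a $K$-subspace satisfying $W = (W \cap (V \otimes_k k_j)) \otimes_{k_j} K$ for $j = 1, 2$, then $W = (W \cap V) \otimes_k K$.

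First I would reduce to the affine case. Cover $Y$ by affine opens $U = \Spec A$; since $k \to K$ is faithfully flat, any descent of $X \cap U_K$ to $U$ is unique, so local descents automatically glue. Thus I may assume $Y = \Spec A$ affine, and let $I \subset A \otimes_k K$ be the ideal of $X$. The hypothesis provides ideals $I_j \subset A \otimes_k k_j$ with $I_j \otimes_{k_j} K = I$ for $j = 1, 2$. The goal becomes to show that $J := I \cap A$ satisfies $I = J \otimes_k K$, since $J$ is closed under multiplication by $A$ and hence an ideal, and $X_0 := \Spec(A/J) \subset Y$ will be the desired descent.

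For the linear algebra statement, I first reduce to the finite-dimensional case: any $w \in W$ lies in $V' \otimes_k K$ for some finite-dimensional $k$-subspace $V' \subset V$, and the intersection $W \cap (V' \otimes_k K)$ inherits descent over $k_j$, since for the flat extension $k_j \hookrightarrow K$, tensoring preserves intersections of submodules, giving
\[
\bigl(W \cap (V' \otimes_k k_j)\bigr) \otimes_{k_j} K \;=\; W \cap (V' \otimes_k K).
\]
It then suffices to treat finite-dimensional $V$ and $W$. Well-order a $k$-basis of $V$ and consider the unique reduced row echelon basis of $W$ with respect to this ordering: any spanning set of $W$ with entries in a subfield $F' \subset K$ row-reduces, by rational operations, to this RREF basis with entries still in $F'$. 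By uniqueness, the RREF entries lie in every subfield over which $W$ is defined; applied to $F' = k_1$ and $F' = k_2$, they lie in $k_1 \cap k_2 = k$, so $W$ is defined over $k$.

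Applying the lemma to $V = A$ and $W = I$ yields $I = J \otimes_k K$, producing the closed subscheme $X_0 \subset Y$ with $X_0 \otimes_k K = X$ as required. The main obstacle I anticipate is checking that descent over $k_j$ passes to finite-dimensional truncations, which relies on the flat-base-change identity $(M_1 \cap M_2) \otimes_{k_j} K = (M_1 \otimes_{k_j} K) \cap (M_2 \otimes_{k_j} K)$ for submodules of a common module; once this is in place, the Gaussian-elimination core of the argument is elementary and needs no further hypotheses on $k_1$ and $k_2$ (such as algebraic closedness in $K$).
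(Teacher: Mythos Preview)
Your proposal is correct and follows essentially the same route as the paper: reduce to the affine case, recast as a linear-algebra descent problem for a subspace $W\subset V_0\otimes_k K$, pass to finite-dimensional truncations, and then use the uniqueness of the reduced row echelon form to conclude that the entries lie in $k_1\cap k_2=k$. Your explicit invocation of the flat-base-change identity to justify that $W\cap(V'\otimes_k K)$ inherits the descent hypothesis over each $k_j$ is a point the paper leaves implicit, so your write-up is in fact slightly more careful there.
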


\proof
By taking an open affine cover of $Y$, it suffices to treat the case that $Y=\Spec R_0$ is affine. Denote by $I$ the ideal of $R_{0,K}=R_0\otimes_kK$ associated to $X$.
We only need to show $I=I_0\otimes_kK$ for $I_0:=I\cap R_0.$ Here we identify $R_0$ with its image in $R_{0,K}=R_0\otimes_kK.$

This converts to a basic result in linear algebra.  
Namely, let $K,k,k_1,k_2$ be as before, let $V_0$ be a vector space over $k$, and let $V=V_0\otimes_kK$ be the vector space over $K$.
Let $W$ be a $K$-subspace of $V$. 
Assume that $W$ can be descended to $k_i$ for $i=1,2$; i.e.,
$W=W_i\otimes_{k_i}K$ for a $k_i$-subspace $W_i$ of $V_0\otimes_kk_i$. 
Then $W$ can be descended to $k$. 

If $V_0$ is finite-dimensional, the result is an easy consequence of the existence of the Grassmannian variety, but we give an elementary proof as follows.
Write $n=\dim_k V_0$ and $m=\dim_K W$. 
Take a $k$-basis of $V_0$, and use it to identify $V_0=k^n$ and $V=K^n$.
Then $W$ is represented by a $m\times n$ matrix with coefficients in $K$. 
In fact, take a basis of $W$, each element of which gives a row of the matrix.
By row operations, we can convert the matrix to a reduced row echelon form $E$ over $K$. 
We can also get a reduced row echelon form $E_i$ over $k_i$ for $W_i\subset k_i^n$. 
By the uniqueness of the reduced row echelon from, we have $E=E_1=E_2$. Then the coefficients of $E$
are in $k_1\cap k_2=k$.
It follows that $W$ can be descended to $k$.

If $V_0$ is infinite-dimensional, let $V_0'$ be a finite-dimensional subspace over $k$ and write $V'=V_0'\otimes_kK$.
Apply the above results to the subspace $W\cap V'$ of $V'$. 
We see that $W\cap V'$ can be descended to $k$.
Vary $V_0'$. Note that any vector of $W$ is contained in some $V'$. 
It follows that $W$ is contained in the $K$-subspace of $V$ spanned by $W\cap V_0$. 
Then $W$ can be descended to $k$.
This concludes the proof.
\endproof

\begin{cor}\label{cordeffield}
Let $K/k$ be an extension of algebraically closed fields. 
\begin{points}
\item Let $A$ be an abelian variety over $K.$
Then there is an algebraically closed intermediate field $k_A$ of $K/k$, such that for every algebraically closed intermediate field $k'$ of $K/k$, 
$A$ is defined over $k'$ up to isogeny if and only if $k_A\subseteq k'.$
Moreover, we have $\trdeg(k_A/k)<\infty.$
\item Let $Y$ be a variety over $k$. Let $X$ be a subvariety of $Y_K=Y\otimes_kK.$
Then there is an algebraically closed intermediate field $k_{X\subseteq Y_K}$ of $K/k$, such that for every algebraically closed intermediate field $k'$ of $K/k$,
$X\subseteq Y_K$ is defined over $k'$ if and only if $k_{X\subseteq Y_K}\subseteq k'.$
Moreover, we have $\trdeg(k_{X\subseteq Y_K}/k)<\infty.$
\end{points}
\end{cor}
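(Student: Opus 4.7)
My plan is to show, for each part, that the family $\sF$ of algebraically closed intermediate fields of $K/k$ over which the object descends (up to isogeny, in (i)) is closed under pairwise intersection and contains an element of finite transcendence degree over $k$. Granted both, the desired field is simply an element of $\sF$ of minimum transcendence degree over $k$ among those contained in a fixed finite-transcendence-degree witness.

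For the finite-transcendence-degree witness, both parts are analogous: the object is cut out by finitely many equations over $K$, so it descends to some finitely generated intermediate field $L\subseteq K$, and its algebraic closure $k_0$ in $K$ (which is algebraically closed as a field, since $K$ itself is algebraically closed) lies in $\sF$ and has $\trdeg(k_0/k)<\infty$. For closure under intersection, part (ii) is immediate from Lemma \ref{lemsubvardef}. Part (i) requires a spreading-out step, because Proposition \ref{lembasechangediagram} demands a finite-transcendence-degree ambient extension, which is not automatic for $K/(k_1\cap k_2)$. Given $k_1,k_2\in\sF$ with abelian varieties $A_i/k_i$ and an isogeny $\Psi\colon A_1\otimes_{k_1}K\sim A_2\otimes_{k_2}K$, I pick finitely generated extensions $L_i\subseteq k_i$ of $k_1\cap k_2$ over which $A_i$ descends (say to $A_i^{L_i}$), and a finitely generated extension $N\subseteq K$ of $k_1\cap k_2$ containing $L_1,L_2$ over which $\Psi$ descends. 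Let $K'$ be the algebraic closure of $N$ in $K$; it is algebraically closed and of finite transcendence degree over $k_1\cap k_2$. Set $k_i':=K'\cap k_i$. Then $k_i'$ is algebraically closed (intersection of two algebraically closed subfields of $K$), contains $L_i$, and $k_1'\cap k_2'=k_1\cap k_2$ since $k_1\cap k_2\subseteq N\subseteq K'$. Writing $A_i^{k_i'}:=A_i^{L_i}\otimes_{L_i}k_i'$, Proposition \ref{lembasechangediagram} applied to $(k_1\cap k_2,\,K',\,k_1',\,k_2',\,A_1^{k_1'},\,A_2^{k_2'})$ yields $A_0$ over $k_1\cap k_2$ with $A_0\otimes_{k_1\cap k_2}k_1'\sim A_1^{k_1'}$; base-changing through $k_1$ and then to $K$ gives $A_0\otimes_{k_1\cap k_2}K\sim A$, so $k_1\cap k_2\in\sF$.

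For the minimum, I choose $k_A\in\sF$ contained in $k_0$ of minimal $\trdeg(\cdot/k)$. For any $k'\in\sF$, intersection closure gives $k_A\cap k'\in\sF$ with $k_A\cap k'\subseteq k_0$; minimality forces $\trdeg((k_A\cap k')/k)=\trdeg(k_A/k)$, and by transcendence-degree additivity $k_A$ is algebraic over the algebraically closed field $k_A\cap k'$, whence $k_A=k_A\cap k'\subseteq k'$. The hard part is the spreading-out step in closure under intersection for part (i); the main care is to choose $K'$ so that $k_i':=K'\cap k_i$ inherits algebraic closedness and the identity $k_1'\cap k_2'=k_1\cap k_2$, both of which are precisely what is needed to apply Proposition \ref{lembasechangediagram}.
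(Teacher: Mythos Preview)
Your proof is correct and follows essentially the same approach as the paper's: both arguments take the set $\sF$ (the paper calls it $I$) of algebraically closed intermediate fields over which the object descends, observe it contains an element of finite transcendence degree, choose $k_A\in\sF$ of minimal transcendence degree, and then use Proposition~\ref{lembasechangediagram} (resp.\ Lemma~\ref{lemsubvardef}) to show $k_A\cap k'\in\sF$ for any $k'\in\sF$, forcing $k_A\subseteq k'$. The only difference is that the paper applies Proposition~\ref{lembasechangediagram} directly to $(k',k_A)$ inside $K$, tacitly using that only $\trdeg(k_i/k)<\infty$ is really needed in its proof, whereas you insert an explicit spreading-out to a subfield $K'\subseteq K$ of finite transcendence degree so that the stated hypothesis $\trdeg(K'/(k_1\cap k_2))<\infty$ is literally met; this is additional care rather than a different route.
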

\proof
We only prove (i). The proof for (ii) is almost the same, except replacing  Proposition \ref{lembasechangediagram} by Lemma \ref{lemsubvardef}.

Let $I$ be the set of algebraically closed intermediate fields $k'$ of  $K/k$ such that $A$  is defined over  $k'$.
Because $A$ is of finite type, for every $k'\in I$, there is $k''\in I$ contained in $k'$ and satisfying $\trdeg(k''/k)<\infty.$

So there is $k_A\in I$, such that $\trdeg(k_A/k)$ is the smallest. 
It is clear that for every algebraically closed intermediate field $k'$ of $K/k$, if $k_A\subseteq k'$, $k'\in I.$
So we only need to show that  for every $k'\in I$, $k_A\subseteq k'.$ One may assume that $\trdeg(k'/k)<\infty.$
By Proposition \ref{lembasechangediagram}, $k'\cap k_A\subseteq I.$
This proves $k_A\subseteq k'$ because $\trdeg(k_A/k)$ is the smallest.
\endproof

\subsection{Special subvarieties of abelian varieties}\label{subsectionnonspecial}

%

Let $K/k$ be a field extension such that $k$ is algebraically closed in $K$. 
Denote by $I(K/k)$ the set of intermediate fields $k'$ of $K/k$ which is algebraically closed in $K.$
If $\trdeg(K/k)>1$, then  $I(K/k)$ is infinite.

Let $A$ be an abelian variety over $K$.   Recall that 
a subvariety $X$ of $A_{\overline{K}}$ is said to be special in $A/K/k$ if  
\begin{equation}
X=\tr(Y{\otimes_{\overline{k}}\overline{K}})+T
\end{equation}
for some torsion subvariety $T$ of $A_{\overline{K}}$ and  some subvariety $Y$ of $A^{\overline{K}/\overline{k}}$.



\begin{pro}\label{lemnonspecial}
Let $K/k$ be a field extension such that $k$ is algebraically closed in $K$. 
Let $A$ be an abelian variety over $K$.  Let $X$ be a subvariety of $A_{\overline{K}}.$
Then there is $k_{A,X}\in I(K/k)$ such that for every $k'\in I(K/k)$, $X$ is special  for $A/K/k'$  if and only if $k_{A,X}\subseteq k'$.
\end{pro}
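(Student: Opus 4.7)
\emph{Plan.} Let $I_X$ denote the set of $k' \in I(K/k)$ for which $X$ is special in $A/K/k'$. The set $I(K/k)$ is itself closed under intersection (any element of $K$ algebraic over $k_1' \cap k_2'$ is algebraic over each $k_i'$, hence lies in $k_1' \cap k_2'$), and the proposition amounts to showing $I_X$ admits a minimum element. The plan is to establish three properties of $I_X$: (1) \emph{monotonicity}: $k_1' \subseteq k_2'$ in $I(K/k)$ with $k_1' \in I_X$ implies $k_2' \in I_X$; (2) \emph{finite descent}: every $k' \in I_X$ contains some $k'' \in I_X$ with $\trdeg(k''/k) < \infty$; (3) \emph{closure under intersection}: $k_1', k_2' \in I_X$ implies $k_1' \cap k_2' \in I_X$. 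Given these, choose $k^* \in I_X$ of minimal transcendence degree over $k$ (possible by (2)). For any $k' \in I_X$, property (3) gives $k^* \cap k' \in I_X$; by the minimality of $\trdeg(k^*/k)$ we have $\trdeg((k^* \cap k')/k) = \trdeg(k^*/k)$, and since both fields are algebraically closed in $K$, this forces $k^* \cap k' = k^*$, so $k^* \subseteq k'$. Hence $k_{A,X} := k^*$ is the desired minimum.

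\emph{Easy steps.} Property (1) is immediate from universality of the trace: the inclusion $\overline{k_1'} \subseteq \overline{k_2'}$ induces a morphism $A^{\overline{K}/\overline{k_1'}} \otimes_{\overline{k_1'}} \overline{k_2'} \to A^{\overline{K}/\overline{k_2'}}$ compatible with the trace maps to $A_{\overline{K}}$, so any special decomposition $X = \tr_1(Y \otimes_{\overline{k_1'}} \overline{K}) + T$ over $k_1'$ base-changes to one over $k_2'$. Property (2) follows because a special decomposition $X = \tr_{k'}(Y \otimes_{\overline{k'}} \overline{K}) + T$, with $T = a + C$, involves only finitely many algebraic ingredients: the abelian subvariety $A^\circ_{k'} := \tr_{k'}(A^{\overline{K}/\overline{k'}} \otimes_{\overline{k'}} \overline{K}) \subseteq A_{\overline{K}}$, its subvariety $\tr_{k'}(Y \otimes_{\overline{k'}} \overline{K})$, and the torsion translate $T$. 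Applying Corollary \ref{cordeffield} to the first two and Lemma \ref{lemsubvardef} to $C$, all these data descend to a finitely generated subextension; the algebraic closure in $K$ of this subextension yields the required $k'' \in I(K/k) \cap I_X$ of finite transcendence degree.

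\emph{Main step and main obstacle.} Property (3) is the heart of the proof. With $k_0 := k_1' \cap k_2'$, the key geometric input I would aim for is the identity
\[ A^\circ_{k_0} = A^\circ_{k_1'} \cap A^\circ_{k_2'} \]
as abelian subvarieties of $A_{\overline{K}}$ (taken up to identity components and isogeny), which follows from Proposition \ref{lembasechangediagram}: the right-hand side is an abelian variety descending to both $\overline{k_1'}$ and $\overline{k_2'}$, hence to $\overline{k_0}$. Given decompositions $X = \tr_i(Y_i) + T_i$ for $i = 1, 2$, I would then work modulo the stabilizer $\Stab(X) \subseteq A_{\overline{K}}$: $X$ projects to a single point in $A_{\overline{K}}/\Stab(X)$, and specialness of $X$ over $k'$ translates to the statement that this projection is the sum of a torsion point and a point in the image of $A^\circ_{k'}$. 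The identity above expresses this common projection through $A^\circ_{k_0}$, and Lemma \ref{lemsubvardef} descends the resulting subvariety of $A^{\overline{K}/\overline{k_0}}$, yielding a special decomposition of $X$ over $k_0$. The main obstacle is precisely the non-uniqueness of the decomposition $X = \tr(Y) + T$: torsion can be shuffled between the two summands, so $Y$ and $T$ are not individually canonical, and passing to the stabilizer quotient is the natural way to isolate the canonical piece of information before performing the descent.
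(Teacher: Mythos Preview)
Your overall strategy---establish monotonicity, finite descent, and closure under intersection for $I_X$, then take an element of minimal transcendence degree---is sound and is exactly how the paper proves Corollary~\ref{cordeffield}. The gap lies in your step~(3). The assertion that ``$X$ projects to a single point in $A_{\overline K}/\Stab(X)$'' is false: the image $\bar X$ is a subvariety with trivial connected stabilizer, but not in general a point. It would be a point only if $X$ were a translate of an abelian subvariety, whereas a general special subvariety has the form $\tr(Y\otimes\overline K)+T$ with $Y$ an \emph{arbitrary} subvariety of the trace. Your reformulation of specialness as a condition on a single point of the quotient therefore has no content, and the proposed descent of that point via Proposition~\ref{lembasechangediagram} and Lemma~\ref{lemsubvardef} does not go through. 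The identity $A^\circ_{k_0}=A^\circ_{k_1'}\cap A^\circ_{k_2'}$ you isolate is correct (up to connected components) and is a useful ingredient, but by itself it only constrains where $\bar X$ can live, not whether $\bar X$ actually descends.

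The paper avoids this difficulty by a second reduction after quotienting by $\Stab^0(X)$: it translates $X$ by a torsion point so that the minimal torsion subvariety $T_X$ containing $X$ equals $A$. With both reductions in force, specialness of $X$ over $k'$ forces the image of the $\overline K/\overline{k'}$-trace to be all of $A$; after further replacing $A$ by an isogenous model defined over $k$, specialness over $k'$ becomes exactly the statement that the subvariety $X\subseteq A$ is defined over $k'$, and Corollary~\ref{cordeffield}(ii) applies directly. In short, the paper first converts ``special'' into a pure field-of-definition condition and only then invokes the minimality/intersection argument, rather than comparing two special decompositions over $k_1'$ and $k_2'$ head-on.
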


\proof 
There is an bijection $\phi: I(\overline{K}/\overline{k})\to I(K/k)$ sending $k'$ to $k'\cap K.$ Because $\phi$ preserves the ordering and for $k'\in I(\overline{K}/\overline{k})$, and 
$X$ is special  for $A_{\overline{K}}/\overline{K}/k'$ if and only if $X$ is special  for $A/K/\phi(k')$, we may assume that $k$ and $K$ are algebraically closed.

\medskip

Let $\Stab^0(X)$ be the identity component of the closed subgroup scheme
$$\Stab(X):=\{g\in A|\,\, g+X=X\}.$$ 
We note that, for every $k'\in I(K/k)$,  $X$ is special  for $A/K/k'$ if and only if $X/\Stab^0(X)$ is special  for $(A/\Stab^0(X))/K/k',$
where  $X/\Stab^0(X)$ is the image of $X$ under the quotient morphism $A\to A/\Stab^0(X).$
After replacing $(A, X)$ by $(A/\Stab^0(X), X/\Stab^0(X))$, we may assume that $\Stab^0(X)=0.$

Let $T_X$ be the minimal torsion subvariety of $A$ containing $X.$ Let $a$ be a torsion point in $T_X$, so that $T_X-a$ is an abelian subvariety of $A$. 
Then for every $k'\in I(K/k)$,  $X$ is special  for $A/K/k'$ if and only if $X-a$ is special  for $(T_X-a)/K/k'.$ After replacing $(A,X)$ by $(T_X-a, X-a)$, we may assume that $T_X=A.$

We first assume that $\tr(A^{K/k})_K\neq A$. 
By Corollary \ref{cordeffield}(i), there is $k_{A,X}:=k_A\in I(K/k)$ such that for any $k'\in I(K/k)$,
$\tr(A^{K/k'})_K= A$ if and only if $k_{A,X}\subseteq k'.$ 
Since $\Stab^0(X)=0$ and $T_A(X)=A$, $X$ is special for $A/K/k'$ if and only if $k_{A,X}\subseteq k'.$

Now we assume that $\tr(A^{K/k})_K= A.$ Pick an isogeny $\Phi: A\to \tr(A^{K/k})_K.$ Then for every $k'\in I(K/k)$,  $X$ is special for $A/K/k'$ if and only if $\Phi(X)$ is special for $\tr(A^{K/k'})_K/K/k'.$
Moreover, we still have $\Stab^0(\Phi(X))=0$ and $T_{\Phi(X)}=\tr(A^{K/k})_K.$
After replacing $(A, X)$ by $(\tr(A^{K/k})_K, \Phi(X))$, we may assume that $A$ is defined over $k.$
In this case, for any $k'\in I(K/k)$, $X$ is special in $A/K/k'$ if and only if  $X$ as a subvariety of $A$ is defined over $k'.$
By Corollary \ref{cordeffield}(ii), there is $k_{A,X}:=k_{X\subseteq A}\in I(K/k)$ such that for every $k'\in I(K/k)$, $X$ is special in $A/K/k'$
if and only if $k_{A,X}\subseteq k'$.
\endproof

\subsection{Proof of Proposition \ref{changefield}}

We start with the following general result. 

\begin{lem}\label{generic hyperplane} 
Let $k$ be an algebraically closed field.
Let $K/k$ be a finitely generated extension of transcendence degree $\trdeg(K/k)>1.$
Let $(S,\sM)$ be a polarization of $K/k$.
Assume that $\sM$ is very ample over $S$.

Then there are infinitely many intermediate fields $k'$ of $K/k$, algebraically closed in $K$ and of transcendence degree $1$ over $k$, together with a geometrically integral subvariety $H$ in $S_{k'}$ of codimension 1 satisfying the following properties.
\begin{itemize}
\item[(1)]
the divisor $H$ of $S_{k'}$ is linearly equivalent to $\sM_{k'}$;
\item[(2)]
the composition $H\to S_{k'} \to S$ induces an isomorphism between the generic points of $H$ and $S$.
\end{itemize}

As a consequence of (1) and (2), the pair $(k',H)$ satisfies the following property.
Let $A$ be any abelian variety over $K$, and let $L$ be any symmetric and ample line bundle over $A$.
Then under the polarization $(S,\sM)$ of $K/k$ and the polarization of $(H, \sM_{k'}|_H)$ of $K/k'$, we have the inequality 
$$
\hat h_L^\sM(X)\geq \hat h_L^{\sM_{k'}|_H}(X)
$$
of canonical heights for any subvariety $X$ of $A_{\overline{K}}.$
\end{lem}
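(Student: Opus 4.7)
The plan is to construct each pair $(k',H)$ from a general pencil in $|\sM|$. Choose a general line $\ell\subset |\sM|$ spanned by sections $s_0,s_1\in H^0(S,\sM)$, set $t:=s_0/s_1\in K$ and $k':=k(t)\subset K$, and define $H\subset S_{k'}$ as the divisor of $s_0-ts_1\in H^0(S_{k'},\sM_{k'})$. Equivalently, $H$ is the generic fiber over $\Spec k'$ of the rational map $\phi_\ell\colon S\dashrightarrow \P^1_k$ induced by the pencil. The Grassmannian of lines in $|\sM|$ has positive dimension (since $\dim|\sM|\geq \dim S\geq 2$), and generic variation of $\ell$ produces infinitely many distinct subfields $k'\subset K$ (different $2$-dimensional subspaces of $H^0(S,\sM)$ giving different subfields generated by ratios of their spanning sections).

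Property (1) is built in: $H$ is the zero scheme of a global section of $\sM_{k'}$, hence $H\sim\sM_{k'}$. For (2), the generic point of $S_{k'}$ has residue field $K\otimes_k k'=K(t)$, and the defining equation $s_0-ts_1=0$ forces $t=s_0/s_1\in K$; hence $k(H)=K$, so $H\to S_{k'}\to S$ induces the identity on residue fields of generic points. That $k'$ is algebraically closed in $K$ and that $H$ is geometrically integral over $k'$ both reduce to a Bertini-type statement for a general pencil of a very ample linear system on the normal projective variety $S$: the total space $\sH\subset S\times_k\ell$ of the pencil is birational to $S$ (hence irreducible), and for general $\ell$ the generic fiber of $\sH\to\ell$ is geometrically integral over $k'=k(\ell)$. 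Securing this Bertini input in arbitrary characteristic is the main delicate step of the construction.

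For the height inequality, choose $\sL$ ample on $\sA$ and take $(\sA_H,\sL_H):=(\sA\times_S H,\sL|_{\sA_H})$ as integral model of $(A,L)$ over $H$; since canonical heights are independent of the integral model, it suffices to compare naive heights. Working on $\sA_{k'}:=\sA\times_k\Spec k'$, flat base change preserves degrees of $0$-cycles, and $\sA_H=\pi_{k'}^{-1}(H)$ satisfies $[\sA_H]=\pi_{k'}^*[\sM_{k'}]$ by (1). Writing $n:=\dim S$, the numerator of $h_{(\sA,\sL)}^\sM(X)$ equals
\[
\sL_{k'}^{\dim X+1}\cdot(\pi_{k'}^*\sM_{k'})^{n-2}\cdot[\sA_H]\cdot[\sX_{k'}],
\]
while the projection formula along the closed immersion $\sA_H\hookrightarrow\sA_{k'}$ identifies the numerator of $h_{(\sA_H,\sL_H)}^{\sM_{k'}|_H}(X)$ with
\[
\sL_{k'}^{\dim X+1}\cdot(\pi_{k'}^*\sM_{k'})^{n-2}\cdot[\sX_H].
\]
The intersection $[\sA_H]\cdot[\sX_{k'}]$ is proper (since $\sX_{k'}$ dominates $S_{k'}$) and decomposes as $m[\sX_H]+E$, where $m\geq 1$ is the multiplicity of the unique component $\sX_H$ dominating $H$ and $E\geq 0$ collects the remaining components. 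Pairing the effective difference $(m-1)[\sX_H]+E$ against the nef product $\sL_{k'}^{\dim X+1}\cdot(\pi_{k'}^*\sM_{k'})^{n-2}$ gives a non-negative quantity, yielding the desired inequality for naive heights; applying it to $[n']X$ for each $n'\geq 1$ and taking Tate's limit gives the inequality for canonical heights.
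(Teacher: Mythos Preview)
Your approach is essentially the same as the paper's: both construct $k'$ as the function field of a pencil in $|\sM|$ and take $H$ to be its generic member, and both prove the height inequality by base-changing to $k'$, writing $H\sim\sM_{k'}$, and comparing the full intersection cycle $(\sX_{k'})\cdot\pi_{k'}^{-1}(H)$ with the Zariski closure of $X$ in $\sA_H$.

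The one genuine gap is the step you flag as ``the main delicate step'' but do not carry out: showing that the generic fiber of the pencil is geometrically integral over $k'$ (equivalently, that $k'$ is algebraically closed in $K$ and $H$ is geometrically integral). The paper's trick here is clean and worth knowing. Rather than asking for a general pencil, one first chooses a single section $s_0$ with $H_0=\div(s_0)$ geometrically integral, using Jouanolou's Bertini theorem for geometric integrality (valid in arbitrary characteristic for very ample linear systems on a geometrically integral variety). Then one picks any $s_1$ not proportional to $s_0$, blows up the base locus $H_0\cap H_1$ to obtain a flat morphism $\psi:\widetilde S\to\P^1_k$, and observes that $H_0$ is a closed fiber of $\psi$. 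Since geometric integrality of fibers is an open condition on the base in a flat proper family (EGA IV, 12.2.4), the generic fiber is geometrically integral as well. This argument avoids any appeal to generic smoothness or characteristic-zero Bertini, which would fail here.

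A small remark on your height argument: you write the intersection as $m[\sX_H]+E$ with $m\geq 1$; the paper simply notes that the Zariski closure $\sX_H^*$ is an irreducible component of the (equidimensional) proper intersection $(\sX_{k'})_H$, so the difference is effective. Your formulation is slightly more careful but amounts to the same thing.
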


\begin{proof}
By a Bertini type of theorem (cf. \cite[Theorem 6.10]{Jouanolou}), for a general section 
$s_0\in \Gamma(S,\sM)$, $H_0=\div(s_0)$ is geometrically integral.
Set $H_1=\div(s_1)$ for some $s_1\in \Gamma(S,\sM)$ which is not a multiple of $s_0$.
Then $s_0$ and $s_1$ determine a plane in $\Gamma(S,\sM)$, and thus a pencil in $S$ with base locus $H_0\cap H_1$. 
This gives a rational map $S\dashrightarrow \P^1_k$ sending $x$ to $(s_0(x),s_1(x))$. 
Let $\pi:\widetilde S\to S$ be the blowing-up along $H_0\cap H_1$. 
Then the rational map becomes a flat morphism $\psi:\widetilde S\to \P^1_k$ as in \cite[II, Example 7.17.3]{Hartshorne1977}.

For any point $(a_0,a_1)\in \P^1_k(k)$, the fiber $\psi^{-1}(a_0,a_1)\subset \widetilde S$ is isomorphic to the hyperplane section $\div(a_0s_1-a_1s_0)$ of $S$. In fact, $\psi^{-1}(a_0,a_1)\subset \widetilde S$ is the strict transform of 
$\div(a_0s_1-a_1s_0)$ in $\widetilde S$, and thus isomorphic to the blowing-up of $\div(a_0s_1-a_1s_0)$ along $H_0\cap H_1$. Note that $H_0\cap H_1$ is a Cartier divisor in $\div(a_0s_1-a_1s_0)$, by  
$H_0\cap H_1=\div(a_0s_1-a_1s_0)\cap \div(s_0)$ if $a_1\neq 0$ and $H_0\cap H_1=\div(a_0s_1-a_1s_0)\cap \div(s_1)$ if $a_0\neq 0$. 
Then the blowing-up of $\div(a_0s_1-a_1s_0)$ along $H_0\cap H_1$ does not change $\div(a_0s_1-a_1s_0)$. 

As a consequence, $H_0$ and $H_1$ are closed fibers of $\psi$. 
As $H_0$ is geometrically integral, the generic fiber $H' \to \Spec k'$ of $\psi:\widetilde S\to \P^1_k$ is geometrically integral.
Here $k'=k(\P^1_k)$ is set to be the function field of $\P^1_k$, and identified with a subfield of $K$ via $\psi$. 
Because $H' \to \Spec k'$ is geometrically integral,
$k'$ is algebraically closed in $K.$ 

Let $H$ be the image of $H'$ under the morphism $\pi\times \psi: \widetilde S \to S\times \P^1_k.$
Let $\pi_1: S\times \P^1_k\to \P^1_k$ be the projection to the first factor and 
$\pi_2:S\times \P^1_k\to \P^1_k$ be the projection to the second factor. Then $H\subseteq \pi_2^{-1}(\Spec k')=S_{k'}.$
The generic point of $H$ is also the generic point of $\widetilde S$, so $H$
satisfies conditions (2). 

Now we check that $H$ satisfies condition (1). 
As above, for any field extension $F/k$ and any point $t\in \P^1_k(F)=\P^1_F(F)$, the fiber $\psi^{-1}(t)\subset \widetilde S_F$ is isomorphic to a hyperplane section of $S_F$ corresponding to $\sM_F$. 
Take $F=k'$ and take
$t\in \P^1_k(F)$ to be the generic point $\Spec k'\to \P^1_k$.
This implies that $H'$ is isomorphic to a hyperplane section of $S_{k'}$. 
The same result holds for $H$, since it is the image of the composition $H'\to S_{k'} \to S\times \P^1_k$.

It remains to check the height inequality. 
We can assume that $X$ is a closed subvariety of $A$. 
Let$(\sA,\sL)$ be an integral model of $(A,L)$ over $S$ with structure morphism $\pi:\sA\to S$. 
We can further assume that $\sL$ is ample, which can be achieved by passing to a tensor power of $L$. 
Denote by $\sX$ the Zariski closure of $X$ in $A$.
It suffices to prove that the inequality for the relevant naive heights, which becomes the inequality
$$
\sL^{\dim X+1}\cdot (\pi^*\sM)^{\dim S-1} \cdot \sX
\geq (\sL_H)^{\dim X+1}\cdot (\pi_H^*\sM_H)^{\dim S-2} \cdot \sX_H^*.
$$
Here the right-hand side is an intersection in $\sA_H=\sA\times_SH$, and $\pi_H,\sL_H,\sM_H, \sX_H$ denote the base change of $\pi,\sL,\sM, \sX$ via the morphism $H\to S$.
Moreover, $\sX_H^*$ denotes the Zariski closure of $X$ in $\sA_H$. 

Denote by $\pi_{k'}:\sA_{k'}\to S_{k'},\sL_{k'},\sM_{k'}, \sX_{k'}$ the base change of $\pi:\sA\to S,\sL,\sM, \sX$ via the morphism $\Spec k'\to \Spec k$.
By base change, the left-hand side of the inequality is equal to 
$$
\sL_{k'}^{\dim X+1}\cdot (\pi_{k'}^*\sM_{k'})^{\dim S-1} \cdot \sX_{k'},$$
which is an intersection in $\sA_{k'}$. 
Note that $\sM_{k'}$ is linearly equivalent to the hyperplane section $H$ of $S_{k'}$.
The intersection number is further equal to 
$$(\sL_H)^{\dim X+1}\cdot (\pi_H^*\sM_H)^{\dim S-2} \cdot (\sX_{k'})_H.$$
Here $(\sX_{k'})_H= \sX_{k'}\cap \pi_{k'}^{-1}(H)$ is a proper intersection in $\sA_{k'}$, so it is equi-dimensional. 

The Zariski closure $\sX_H^*$ of $X$ in $\sA_H$ is an irreducible component of $(\sX_{k'})_H$.
Then we have $(\sX_{k'})_H- \sX_H^*$ is an effective cycle. 
As $\sL$ and $\sM$ are ample, we have 
$$(\sL_H)^{\dim X+1}\cdot (\pi_H^*\sM_H)^{\dim S-2} \cdot (\sX_{k'})_H
\geq (\sL_H)^{\dim X+1}\cdot (\pi_H^*\sM_H)^{\dim S-2} \cdot \sX_H^*.$$
This proves the inequality.
\end{proof}

Now we are ready to prove Proposition \ref{changefield}.

\begin{proof}[Proof of Proposition \ref{changefield}]
Let $A/K/k$ be as in the proposition.
Let $L$ be an ample line bundle over $A$, and let $(S,\sM)$ be a polarization of $K/k$.
We can assume that $\sM$ is very ample by passing to a positive tensor power. 

Choose $(H_1,k_1), (H_2,k_2)$ as in Lemma \ref{generic hyperplane} such that $k_1\neq k_2$. 
For $i=1,2$, denote by $\sM_i$ the pull-back of $\sM$ via $(H_i)_{{\overline k_i}}\to S$. 
Note that $K{\overline k_i}={\overline k_i}(H_{{\overline k_i}}).$
For any $x\in A(\OK)$, we have the height inequality
$$
\widehat h_L^{\sM}(x)\geq \widehat h_L^{\sM_i}(x).
$$

Let $X$ be a subvariety of $A_{\overline{K}}$ which contains a dense set of small points of $A/K/k$.
Then $X$ contains a dense set of small points of $A_{K{\overline k_i}}/K{\overline k_i}/{\overline k_i}$
for $i=1,2$ by the height inequality. 
By assumption, the geometric Bogomolov conjecture holds for $(A_{K{\overline k_i}}/K{\overline k_i}/{\overline k_i}, X)$, so $X$ is special in $A/K/k_i$ for both $i=1,2.$ 

If $X$ is not special for $A/K/k$, let $k_{A,X}$ as in Proposition \ref{lemnonspecial}. We have $k_{A,X}\neq k.$
By $\trdeg(k_i/k)=1$, we have $k_{A,X}\not\subseteq k_i$ for at least one $i\in \{1,2\}$.
Then $X$ is special for $A/K/k_i$ for that $i$.
This is a contradiction. 
\end{proof}

\section{Line bundles over abelian schemes} \label{sec line bundle}

In this section, we introduce some preliminary results for abelian schemes over curve, which will be used in the proof the geometric Bogomolov conjecture in \S\ref{sec final}. These results are well-known, but we collect them here for convenience.

\subsection{Rigidified line bundles}
Let $S$ be a smooth projective curve over a field $k$. 
Let $\pi:\sA\to S$ be an abelian scheme over $S$.
This will be the basic setup of this section.

For any $m\in\Z$, denote by $[m]:\sA\to \sA$ the homomorphism of multiplication by $m$, and denote by $\sA[m]$ the kernel of this homomorphism. 

By a \emph{multi-section} of $\pi:\sA\to S$, we mean a closed integral subscheme $\sT$ of $\sA$ such that the induced morphism $\sT\to S$ is finite and flat. 
The multi-section $\sT$ is called \emph{torsion} if its corresponding element in the abelian group $\sA_\sT(\sT)$ is torsion. 
The \emph{order} of the torsion multi-section $\sT$ is defined to be the order of the corresponding element in $\sA_\sT(\sT)$.

Any torsion multi-section $\sT$ is necessarily the Zariski closure of a torsion point of $A(\OK)_\tor$ in $\sA$. 
If the order of $\sT$ is $m$, then $\sT$ is a closed subscheme of $\sA[m]$. 
If furthermore $m$ is not divisible by $\mathrm{char}(k)$, then $\sA[m]$ is finite and \'etale over $S$, and thus $\sT$ is also finite and \'etale over $S$. 
In this case, $\sT$ is actually smooth over $k$.

Let $\sL$ be a line bundle over $\sA$. 
We say that $\sL$ is \emph{symmetric} if $[-1]^*\sL\simeq \sL$. 
We say that $\sL$ is \emph{anti-symmetric} if $[-1]^*\sL\simeq \sL^\vee$. 
We say that $\sL$ is \emph{rigidified} if it is endowed with an isomorphism $e^*\sL\simeq \sO_S$ via the identity section $e:S\to \sA$. The isomorphism is called a \emph{rigidification}.
The following results are well-known to experts, but we sketch proofs for convenience of readers.

\begin{lemma} \label{line bundle}
Let $\sL$ be a rigidified line bundle over $\sA$. 
Then the following hold.
\begin{itemize}
\item[(1)] If $\sL$ is symmetric, then $[m]^*\sL\simeq m^2\sL$ for any $m\in \Z$; if $\sL$ is anti-symmetric, then $[m]^*\sL\simeq m\sL$ for any $m\in \Z$.
\item[(2)] For any torsion multi-section $\sT\subset \sA$, the line bundle $\sL|_{\sT}$ is torsion in $\Pic(\sT)$.
\item[(3)] If $\sL$ is symmetric and $\pi$-ample, then $\sL$ is nef over $\sA$. 
\item[(4)] If $\sL$ is symmetric and $\pi$-ample, and $\sL'$ is another symmetric and rigidified line bundle over $\sA$, then there is a positive integer $a$ such that $a\sL-\sL'$ is nef over $\sA$. 
\end{itemize}
\end{lemma}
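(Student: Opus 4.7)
The plan is to obtain (1) and (2) from the theorem of the cube for rigidified line bundles on the abelian scheme $\sA/S$, and then to obtain (3) and (4) from a Tate-style scaling argument using the multiplication endomorphisms $[n]$.

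For (1), I would invoke the standard Mumford formula, itself a direct consequence of the theorem of the cube: for any rigidified line bundle $\sL$ on $\sA$,
$$[m]^*\sL \;\simeq\; \sL^{\otimes m(m+1)/2} \otimes [-1]^*\sL^{\otimes m(m-1)/2}.$$
The symmetric case collapses the right-hand side to $\sL^{\otimes m^2}$ and the anti-symmetric case to $\sL^{\otimes m}$. For (2), if $\sT$ is a torsion multi-section of order $m$, then unwinding the definition of torsion in $\sA_\sT(\sT)$ shows that the inclusion $\iota : \sT \hookrightarrow \sA$ satisfies $[m]\circ\iota = e \circ \pi|_\sT$, so by the rigidification $\iota^*[m]^*\sL \simeq \pi|_\sT^*\, e^*\sL \simeq \sO_\sT$. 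I would then apply this to the symmetric bundle $\sL_+ := \sL \otimes [-1]^*\sL$ and the anti-symmetric bundle $\sL_- := \sL \otimes [-1]^*\sL^\vee$, which together satisfy $\sL^{\otimes 2} \simeq \sL_+ \otimes \sL_-$. Part (1) gives $\sL_+^{\otimes m^2}|_\sT \simeq \sO_\sT$ and $\sL_-^{\otimes m}|_\sT \simeq \sO_\sT$, whence $\sL^{\otimes 2}|_\sT$, and hence $\sL|_\sT$, is torsion in $\Pic(\sT)$.

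For (3), let $C \subset \sA$ be an integral curve. If $\pi(C)$ is a closed point then $C$ lies in a fiber of $\pi$ and $\pi$-ampleness forces $\sL \cdot C > 0$. Otherwise $\pi(C) = S$; I would fix an ample divisor $\sH = a_0\sL + \pi^*\sN$ on $\sA$ with $\sN$ ample on $S$ and $a_0$ large enough that $\sH$ itself is ample (possible because $\sL$ is $\pi$-ample). Combining the projection formula with $\pi\circ[n] = \pi$ and the symmetric case of (1), for each positive integer $n$ one computes
$$0 \;\leq\; \sH \cdot [n]_*C \;=\; [n]^*\sH \cdot C \;=\; a_0 n^2 \, (\sL \cdot C) + d \deg_\sN S,$$
where $d = \deg(C/S)$. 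Dividing by $n^2$ and letting $n\to+\infty$ yields $\sL \cdot C \geq 0$. The step I expect to be most delicate is the bookkeeping behind the middle equality: one must use that $\pi_*[n]_*C = \pi_*C$ remains fixed while the $\sL$-contribution is amplified by $n^2$, and it is exactly this constancy that forces the sign in the limit.

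Part (4) is then essentially formal: the bundle $a\sL - \sL'$ is symmetric and rigidified because these structures are preserved by sums and inverses. Since $\sL$ is $\pi$-ample, the standard fact that $a\sL + \sM$ is $\pi$-ample for any line bundle $\sM$ and sufficiently large $a$ shows $a\sL - \sL'$ is $\pi$-ample once $a$ is chosen large, and (3) applied to it yields nefness.
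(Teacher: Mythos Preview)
Your argument follows essentially the same strategy as the paper's: (1) via cube-type identities plus rigidification, (2) via the symmetric/anti-symmetric decomposition $2\sL=(\sL+[-1]^*\sL)+(\sL-[-1]^*\sL)$ and the triviality of $[m]^*\sL$ along the $m$-torsion, (3) via a Tate limiting argument with the multiplication maps $[n]$, and (4) as a formal consequence of (3). The paper's version of (3) is slightly slicker: it observes directly that $[m]^*(\sL+\pi^*\sL')\simeq m^2\sL+\pi^*\sL'$ is ample (being the pullback of an ample bundle under the finite map $[m]$), hence the $\Q$-bundle $\sL+m^{-2}\pi^*\sL'$ is ample, and the limit $\sL$ is nef---this avoids your curve-by-curve case split, though the content is the same.

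One small slip in your (3): the claim that $a_0\sL+\pi^*\sN$ is ample for $a_0\gg 0$ does \emph{not} follow from $\pi$-ampleness of $\sL$ alone (e.g.\ $\P^1\times\P^1\to\P^1$ by the second projection, $\sL=\sO(1,-2)$, $\sN=\sO(1)$: then $a_0\sL+\pi^*\sN=\sO(a_0,1-2a_0)$ is never ample for $a_0\geq 1$). The standard statement puts the large coefficient on the other factor: $\sL+a_0\pi^*\sN$ is ample for $a_0\gg 0$. With that correction your computation reads $[n]^*\sH\cdot C = n^2(\sL\cdot C)+a_0\, d\deg(\sN)$, and dividing by $n^2$ and letting $n\to\infty$ works exactly as you intended.
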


\begin{proof}
For (1), we only consider the symmetric case as the anti-symmetric case is similar. 
Note that $[m]^*\sL- m^2\sL$ is trivial over every fiber of $\pi:\sA\to S$, so it lies in $\pi^*\Pic(S)$. Consider the pull-back to the identity section. Then $[m]^*\sL- m^2\sL$ is trivial by the rigidification. 

For (2), by $2\sL=(\sL+[-1]^*\sL)+(\sL-[-1]^*\sL)$, we can assume that $\sL$ is either symmetric case or anti-symmetric. Let $m$ be the order of $\sT$. It suffices to prove that $\sL|_{\sA[m]}$ is torsion. Note that $\sA[m]\to S$ is the base change of $[m]:\sA\to \sA$ by the identity section. 
It follows that $([m]^*\sL)|_{\sA[m]}$ is trivial over $\sA[m]$ by the rigidification. 
Then $\sL|_{\sA[m]}$ is torsion over $\sA[m]$ by (1).

For (3), since $\sL$ is $\pi$-ample, there is an ample line bundle $\sL'$ over $S$ such that $\sL+\pi^*\sL'$ is ample over $\sA$. 
Then $[m]^*(\sL+\pi^*\sL')\simeq m^2\sL+\pi^*\sL'$ is ample over $\sA$.
The $\Q$-line bundle $\sL+m^{-2}\pi^*\sL'$ is ample over $\sA$, and thus its limit $\sL$ is nef over $\sA$.

For (4), note that $a\sL-\sL'$ is $\pi$-ample for sufficiently large $a$. Apply (3). 
\end{proof}

\subsection{Numerical classes of torsion multi-sections}

The following actually holds for rational equivalence and for high dimensional base $S$ (cf. \cite[Theorem 2.1]{GZ2014}). 
We will be content to numerical equivalence for $\dim S=1$, which has the following quick proof and is sufficient for our application.

\begin{pro} \label{numerical equivalent}
Let $S$ be a smooth projective curve over a field $k$. 
Let $\pi:\sA\to S$ be an abelian scheme over $S$ of relative dimension $g\geq 1$.
Let $\sL$ be a symmetric and rigidified line bundle over $\sA$. 
Let $\sT$ be a torsion multi-section of $\sA$ over $S$. 
Then there is a numerical equivalence 
$$[\sL]^g\equiv
\frac{\deg(\sL_\eta)}{\deg(\sT/S)} [\sT]$$ 
of 1-cycles over $\sA$. 
Here $\eta$ is the generic point of $S$, and $\deg(\sL_\eta)$ is the degree of $\sL_\eta$ over the abelian variety $\sA_\eta$. 
\end{pro}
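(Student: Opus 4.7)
The plan is to establish the numerical equivalence by testing against an arbitrary line bundle $\sN$ on $\sA$: since $\sA$ is smooth projective, 1-cycles are numerically equivalent exactly when they pair equally with every divisor class, so it suffices to verify
\[
\sN \cdot \sL^g \;=\; \frac{\deg(\sL_\eta)}{\deg(\sT/S)}\, \sN \cdot \sT
\]
for every $\sN \in \Pic(\sA)$.

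First, I would split off the vertical part of $\sN$. Setting $\sM := e^*\sN \in \Pic(S)$, the line bundle $\sN_0 := \sN - \pi^*\sM$ is rigidified. The pullback contribution is routine by the projection formula: $\pi_*(\sL^g)$ equals $\deg(\sL_\eta)$ times the class of a point on $S$ (a fiberwise computation, since $\sL$ has constant degree on the fibers of $\pi$), giving $\pi^*\sM \cdot \sL^g = \deg(\sM)\deg(\sL_\eta)$, while $\pi^*\sM \cdot \sT = \deg(\sM)\deg(\sT/S)$ by restriction to $\sT$. The two contributions balance precisely under the scaling factor in the statement.

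It then remains to show that the rigidified part $\sN_0$ pairs to zero with both $\sT$ and $\sL^g$. For $\sN_0 \cdot \sT = \deg(\sN_0|_\sT)$, Lemma \ref{line bundle}(2) says $\sN_0|_\sT$ is torsion in $\Pic(\sT)$, hence has degree zero on the projective integral curve $\sT$. For $\sN_0 \cdot \sL^g$, I would decompose $2\sN_0 = \sN_0^+ + \sN_0^-$ into its symmetric part $\sN_0 + [-1]^*\sN_0$ and antisymmetric part $\sN_0 - [-1]^*\sN_0$, both of which remain rigidified because $[-1]\circ e = e$. Then Lemma \ref{line bundle}(1) yields $[m]^*\sN_0^+ = m^2\sN_0^+$, $[m]^*\sN_0^- = m\sN_0^-$, and $[m]^*\sL = m^2\sL$; combined with the projection formula for the finite morphism $[m]$ of degree $m^{2g}$, intersecting the pullbacks forces, for every $m \in \Z$, the identities
\[
m^{2g+2}\deg(\sN_0^+ \cdot \sL^g) = m^{2g}\deg(\sN_0^+ \cdot \sL^g), \qquad m^{2g+1}\deg(\sN_0^- \cdot \sL^g) = m^{2g}\deg(\sN_0^- \cdot \sL^g).
\]
Taking $m = 2$ forces both intersection numbers to vanish, so $\sN_0 \cdot \sL^g = 0$.

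I do not anticipate a serious obstacle: given Lemma \ref{line bundle}, the entire argument is essentially formal, amounting to a bookkeeping exercise with the rigidification decomposition of $\Pic(\sA)$ and the $[m]$-eigenvalues. The only mildly delicate point is checking that the symmetric/antisymmetric decomposition preserves rigidification, which is immediate from $e = [-1] \circ e$.
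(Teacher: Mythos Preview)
Your proof is correct and follows essentially the same route as the paper: both arguments test against an arbitrary line bundle, split it as a rigidified piece plus a pullback from $S$, compute the pullback contribution directly via the projection formula, and kill the rigidified contribution using Lemma~\ref{line bundle}(2) on the $\sT$-side and the $[m]$-eigenvalue trick (after the symmetric/antisymmetric decomposition) on the $\sL^g$-side. Your explicit remark that the decomposition preserves rigidification via $[-1]\circ e=e$ is a detail the paper leaves implicit.
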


\begin{proof}
It suffices to prove that 
$$\sL^g\cdot \sM=
\frac{\deg(\sL_\eta)}{\deg(\sT/S)} \sT\cdot \sM$$ 
for any line bundle $\sM$ over $\sA$. 

We first prove that if $\sM$ is rigidified, then both sides are 0. 
The right-hand side is 0 by Lemma \ref{line bundle}(2).
For the left-hand side, by the decomposition $2\sL=(\sL+[-1]^*\sL)+(\sL-[-1]^*\sL)$ again, we can assume that $\sL$ is either symmetric case or anti-symmetric. 
Then $[m]^*\sM\simeq m^i\sM$ for $i=1,2$.
By the projection formula, 
$$([m]^*\sL)^g\cdot ([m]^*\sM)= 
\deg([m]) \sL^g\cdot \sM.$$ 
This is just 
$$m^{2g+i} \,\sL^g\cdot \sM= 
m^{2g}\, \sL^g\cdot \sM.$$ 
It follows that $\sL^g\cdot \sM=0.$

To prove the identity for general $\sM$, write $\sM=(\sM-\pi^*e^*\sM)+\pi^*e^*\sM$. Note that $\sM-\pi^*e^*\sM$ is canonically rigidified. It is reduced to prove the result for $\sM=\pi^*\sN$ for line bundles $\sN$
over $S$. This follows from the simply equalities
$$\sL^g\cdot \pi^*\sN=\deg(\sL_\eta)\deg(\sN),\quad
\sT\cdot \pi^*\sN=\deg(\sT/S)\deg(\sN).$$ 
This finishes the proof.
\end{proof}

\subsection{Canonical height}

Let $S$ be a smooth projective curve over a field $k$. 
Let $K=k(S)$ be the function field. 
Let $A$ be an abelian variety over $K$, and $L$ is a symmetric and ample line bundle over $A$.
Let $X$ be a closed subvariety of $A$. 
Then we have the canonical height $\hat h(X)$ associated to $L$. 

Assume that $A$ has everywhere \emph{good reduction} over $S$.
In this case, we have the following well-known easy interpretation of the canonical height of closed subvarieties of $A$. 

In fact, let $\pi:\sA\to S$ be the unique abelian scheme with generic fiber $A\to \Spec K$.
Let $\sL$ be a symmetric and rigidified line bundle over $\sA$ extending $L$.

For the existence of $\sL$, we first take any line bundle $\sL'$ over $\sA$ extending $L$, which can be obtained by passing to divisors and taking Zariski closures. 
By replacing $\sL'$ by $\sL'-\pi^*e^*\sL'$, we can assume that $\sL'$ is rigidified.  Here $e:S\to\sA$ is the identity section.
Then $\sL$ is automatically symmetric. 
In fact, $\sL-[-1]^*\sL$ is trivial over $A$, so it is isomorphic to $\pi^*\sM$ for some line bundle $\sM$ over $S$.
The rigidification implies that $\sM$ is trivial. 

Denote by $\sX$ the Zariski closure of $X$ in $A$. 
By Lemma \ref{line bundle}(1) and the projection formula, the naive height
$$
h_{\sL}(X)=\frac{\sL^{\dim X+1}\cdot \sX}{(\dim X+1)\deg_L(X)}
$$
already satisfies
$h_{\sL}([n]X)=n^2\, h_{\sL}(X)$. 
As a consequence, $h_{\sL}(X)$ is exactly equal to the canonical height 
$\hat h(X)$ associated to $L$.

\section{Proof of the geometric Bogomolov conjecture} \label{sec final}

The goal of this section is to prove Theorem \ref{thmmain}.
Let $A/K/k$ be as in the theorem. 
By Proposition \ref{changefield}, we can assume the following condition.
\begin{itemize}
\item[(a)] the transcendence degree of $K$ over $k$ is 1. 
\end{itemize}
Let $S$ be the unique (normal) projective model of $K/k$. 
We can further make the following assumptions.
\begin{itemize}
\item[(b)] $S$ is a smooth projective curve over $k$;
\item[(c)] $A$ has semi-stable reduction over $S$. 
\end{itemize}
The second condition is a consequence of the semistable reduction theorem. 

Our next step is to apply Yamaki's result to make the following further assumption:
\begin{itemize}
\item[(d)] $A$ has trivial $\overline K/k$-trace and good reduction over $S$. 
\end{itemize} 
Denote by $A'$ the maximal abelian subvariety of $A$ that has everywhere good reduction over $S$. 
Yamaki \cite[Theorem 1.5]{Yamaki2016a} asserts that the geometric Bogomolov conjecture holds for $A$ if and only if it holds for $A'/\tr(A^{K/k}\otimes_kK)$. 
This gives the condition. 

The major contribution of this section is to prove the geometric Bogomolov conjecture
under (a)-(d), based on the Manin--Mumford conjecture in this case. 

Recall that the Manin--Mumford conjecture was proved by Raynaud \cite{Raynaud1983,Raynaud1983a} over number fields, and proved by Hrushovski \cite{Hrushovski2001} over arbitrary fields.  Hrushovski's proof relies in the model theory of difference fields. Inspired by Hrushovski's proof, Pink--R\"ossler \cite{Pink2002a,Pink2004} gave a new proof using classical algebraic geometry. We will only need the conjecture under assumption (a)-(d). 
For convenience, state it in the following case of trivial $\overline K/k$-trace.

\begin{thm}[Manin--Mumford conjecture] \label{MM}
Let $K$ be a finitely generated field over an algebraically closed field $k$. Let $A$ be an abelian variety over $K$ of trivial $\overline K/ k$-trace.
Let $X$ be a closed subvariety of $A_{\overline K}$. 
Assume that $X(\overline K)\cap A(\overline K)_\tor$ is Zariski dense in $X$. 
Then $X$ is a torsion subvariety of $A_{\overline K}$. 
\end{thm}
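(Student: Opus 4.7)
The plan is to reduce to the classical Manin--Mumford theorems: Raynaud proved the number field case, and Hrushovski established the general case in arbitrary characteristic, later reproved algebro-geometrically by Pink--R\"ossler. My sketch outlines how the hypotheses of the theorem fit into that machinery rather than reproving the deep classical results.

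First, I would make the standard stabilizer reduction. Let $H = \Stab^0(X)$, an abelian subvariety of $A_{\OK}$, and consider the quotient $q\colon A_{\OK} \to A_{\OK}/H$. The image $q(X)$ has trivial (identity component of) stabilizer; density of torsion descends along $q$, and triviality of the $\OK/k$-trace is preserved under taking quotients. Showing that $q(X)$ is a torsion point of $A_{\OK}/H$ implies that $X$ is a torsion translate of $H$, hence a torsion subvariety. Thus we may assume $\Stab(X)$ is finite and aim to show $\dim X = 0$.

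Next, spread $A$ and $X$ out to a model over a normal affine $k$-variety $S$ with $k(S) = K$, giving an abelian scheme $\sA \to S$ and a closed subscheme $\sX \subset \sA$ with generic fiber $X$. Torsion points of $A(\OK)$ then correspond to torsion multi-sections of $\sA$, and by hypothesis those lying in $\sX$ form a Zariski dense subset of the generic fiber. In characteristic $0$, the Lefschetz principle embeds the situation into $\C$ and then into a suitable number field; combining triviality of the $\OK/k$-trace with Silverman's specialization theorem reduces the claim to Raynaud's theorem over a number field. In positive characteristic, a more delicate treatment of $p$-primary torsion is required, and I would directly invoke the Pink--R\"ossler framework, which analyzes torsion via Frobenius-equivariant deformations on abelian schemes and yields the result precisely under the trivial-trace assumption.

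The main obstacle is the positive characteristic case: because Frobenius acts non-invertibly on $p$-torsion, the naive large-Galois-orbit argument that succeeds over number fields collapses at the $p^\infty$-part, and one must invoke either Hrushovski's difference-field machinery or Pink--R\"ossler's deformation-theoretic analysis to control $p$-power torsion. The triviality of the $\OK/k$-trace is exactly what makes this control possible: it is what rules out the special subvarieties coming from the trace part, which would otherwise provide genuine counterexamples to a naive torsion-density statement.
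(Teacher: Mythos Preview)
The paper does not actually prove Theorem~\ref{MM}; it is stated as a known result with citations to Raynaud, Hrushovski, and Pink--R\"ossler, and then used as a black box in \S\ref{sec final}. Your proposal does essentially the same thing: after the standard stabilizer reduction, you invoke those same classical theorems. In that sense your treatment matches the paper's.

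Two corrections are worth making, however. First, you overstate the role of the trivial-trace hypothesis. The Manin--Mumford conjecture (torsion-point density forces $X$ to be a torsion subvariety) holds for every abelian variety over every field, with no trace condition whatsoever; this is precisely what Hrushovski and Pink--R\"ossler prove. The hypothesis appears in the paper's statement only because that is the ambient setting of \S\ref{sec final}, not because it is needed. Your claim that trivial trace ``is exactly what makes this control possible'' and ``rules out the special subvarieties coming from the trace part, which would otherwise provide genuine counterexamples'' conflates Manin--Mumford with Bogomolov: trace-type subvarieties are obstructions to naive \emph{small-point} statements, not to \emph{torsion-point} statements.

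Second, your characteristic-zero reduction via ``Silverman's specialization theorem'' is not the right tool: Silverman's theorem is about the behavior of canonical heights in families, not about torsion. The standard reduction in characteristic zero simply spreads $(A,X)$ out over a finitely generated $\Q$-algebra and specializes to a closed point; torsion specializes to torsion, and density of torsion is preserved for a suitably general closed fiber, reducing directly to Raynaud's theorem over $\overline{\Q}$.
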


We also need the following well-known consequence of Zhang's fundamental inequality.

\begin{lem}\label{fundamental inequality} 
Let $K/k$ be a finitely generated field extension of transcendence degree $1$. 
Let $A$ be an abelian variety over $K$. Let $L$ be a symmetric and ample line bundle over $A$.
Let $X$ be a closed subvariety of $A_\OK$.
If $X$ contains a dense set of small points of $A/K/k$, then $\widehat h_L(X)=0$.
\end{lem}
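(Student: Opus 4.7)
The plan is to deduce this from Zhang's fundamental inequality relating the canonical height of a subvariety to its essential minimum. Define the essential minimum of $X$ by
$$e_1(X) := \sup_{U} \inf_{x \in U(\OK)} \hat h(x),$$
where $U$ ranges over non-empty Zariski open subsets of $X$. Since $\trdeg(K/k) = 1$, the final paragraph of \S\ref{sectioncanheight} tells us that $\hat h$ depends only on $L$ (and not on an auxiliary polarization of $K/k$), so the definition above is unambiguous.

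First, I would show that the density hypothesis forces $e_1(X) = 0$. Fix any non-empty Zariski open $U \subset X$ and any $\epsilon > 0$. Zariski density of $X(\epsilon)$ in $X$ implies $U(\OK) \cap X(\epsilon) \neq \emptyset$, hence $\inf_{x \in U(\OK)} \hat h(x) < \epsilon$. Taking the supremum over $U$ and then letting $\epsilon \to 0$ yields $e_1(X) \leq 0$; combined with the non-negativity of $\hat h$ on points, this gives $e_1(X) = 0$.

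Second, I would invoke Zhang's fundamental inequality
$$\hat h_L(X) \leq e_1(X).$$
In the present function-field setting, $A$ admits a projective integral model $(\sA, \sL)$ over the smooth projective curve $S$ with function field $K$ (after replacing $L$ by a positive multiple to ensure $\sL$ is ample), and the naive height $h_{(\sA, \sL)}(X)$ is a pure intersection number on $\sA$. In this setup, Zhang's original successive-minima argument based on the Hilbert--Samuel formula applies essentially verbatim, yielding the inequality. The arbitrary-characteristic version in the non-archimedean / function field framework is contained in Gubler's work \cite{Gubler2007}.

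Combining the two steps with the non-negativity $\hat h_L(X) \geq 0$ recalled in \S\ref{sectioncanheight} gives $\hat h_L(X) = 0$, as desired. The only point that needs care is citing the correct form of the fundamental inequality in arbitrary characteristic; no new ideas are required beyond what is already in the literature.
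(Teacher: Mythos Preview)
Your proposal is correct and follows essentially the same approach as the paper's proof: both derive the result directly from Zhang's fundamental inequality $\hat h_L(X)\leq e_1(X)$, observe that density of small points forces the essential minimum to vanish, and cite Gubler \cite{Gubler2007} for the function-field version. The paper is terser (it simply states the inequality and the two references, Zhang \cite{Zhang1995} and Gubler \cite{Gubler2007}), but your added detail on why $e_1(X)=0$ and the appeal to nonnegativity of $\hat h_L(X)$ are exactly the implicit steps.
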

\begin{proof}
It is a direct consequence of the fundamental inequality, which asserts that
$$
\sup_{U\subset X} \inf_{x\in U(\overline K)}  
\widehat h_L(x)
\geq   \widehat h_L(X).$$
Here the supremum goes through all open subvarieties $U$ of $X$.
The fundamental inequality was proved over number fields by Zhang \cite[Theorem 1.10]{Zhang1995}, based on his previous works \cite{Zhang1992, Zhang1995a} and as a part of this theorem of successive minima. 
It was generalized to function fields by Gubler \cite[Lemma 4.1]{Gubler2007}. 
\end{proof}

\subsection{Subvarieties generated by addition}

With these preparations, we are ready to prove Theorem \ref{thmmain}. 
Let $A/K/k$ and $X$ be as in the theorem. 
By the above reduction, we can assume that conditions (a)-(d) hold.
By extending $K$ and $k$ if necessary, we can assume that $X=X^*\otimes_K \overline K$ for a subvariety $X^*$ of $A$ over $K$. 
By abuse of notations, we will write $X=X^*$, viewed as a subvariety of $A$.  

In this step, we consider the sum $X$ with itself in $A$. The key assumption we are going to use is that $A$ has a trivial $\overline K/k$-trace. 

For any integer $m\geq1$, denote by $X_m$ the image of the addition morphism 
$$f_m:X^m\longrightarrow A, \quad 
(x_1,\cdots, x_m) \longmapsto x_1+ \cdots+ x_m.$$ 
Set $X_0=0$ to be the identity point of $A$.
Since $X_m$ is the image of the addition morphism
$X_{m-1}\times X\to A$, we have
$$\dim X_{m-1}\leq \dim X_m\leq \dim X_{m-1}+\dim X, \quad m\geq1.$$ 

\begin{lem} \label{addition}
There is a unique integer $r\geq 1$ such that $\dim X_{r-1}<\dim X_r$ and $X_r$ is a torsion subvariety of $A$.
\end{lem}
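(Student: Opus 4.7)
The plan is to analyze the eventual behavior of the sequence $(X_m)$ and exploit the trivial-trace assumption (d). We may assume $\dim X\geq 1$; the zero-dimensional case, where $X$ is a single small point and is torsion by the Lang--N\'eron theorem applied to the trivial-trace abelian variety $A$, is handled separately. Since $(\dim X_m)$ is non-decreasing and bounded by $\dim A$, there is a smallest $R\geq 1$ with $\dim X_R=\dim X_{R+1}$, and by minimality $\dim X_{R-1}<\dim X_R$. For any $x\in X$ the translate $X_R+x\subseteq X_{R+1}$ is irreducible of the same dimension as $X_{R+1}$, hence equals $X_{R+1}$; therefore $X_R+x=X_R+x'$ for all $x,x'\in X$, so $X-X\subseteq B:=\Stab^0(X_R)$. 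It follows that $X_R$ is a single coset of $B$, $X$ is contained in some coset $a+B$ with $a\in A(\overline K)$, and $X_R=Ra+B$.

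Next I would show that $Ra+B$ is a torsion coset. Let $\pi:A\to A/B$ denote the quotient homomorphism; then $\pi(X)$ is the single point $\bar a:=\pi(a)$. Pick a symmetric ample line bundle $\bar L$ on $A/B$ and set $L:=\pi^*\bar L+N$ for some symmetric ample line bundle $N$ on $A$. For every $x\in A(\overline K)$ one has $\widehat h_L(x)=\widehat h_{\pi^*\bar L}(x)+\widehat h_N(x)\geq \widehat h_{\bar L}(\pi(x))$. Since $X$ contains points of arbitrarily small canonical height in $A$ and they all map to $\bar a$, we deduce $\widehat h_{\bar L}(\bar a)=0$. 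By Poincar\'e reducibility up to isogeny, the trivial-trace hypothesis on $A$ descends to $A/B$, and the Lang--N\'eron theorem then forces $\bar a$ to be torsion in $(A/B)(\overline K)$. Using divisibility of $B(\overline K)$, lift to a torsion representative $t\in A(\overline K)$ of the coset $Ra+B$; this exhibits $X_R=t+B$ as a torsion subvariety.

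For uniqueness, suppose $r'<r''$ both satisfy the conditions. Applying the structural argument at $r'$ (in place of $R$) yields $X-X\subseteq \Stab^0(X_{r'})$ and hence $\dim X_m=\dim X_{r'}$ for all $m\geq r'$, contradicting $\dim X_{r''-1}<\dim X_{r''}$. Thus $r=R$ is the unique integer with both properties. The main obstacle is the second step: one must propagate smallness of heights across the quotient $\pi$ to get $\widehat h_{\bar L}(\bar a)=0$, and then upgrade height zero to torsion via Lang--N\'eron. Both substeps rely crucially on the trivial-trace assumption (d) established earlier in the proof of Theorem~\ref{thmmain}.
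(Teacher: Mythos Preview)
Your argument is correct and follows the same overall strategy as the paper: find where the sumset sequence stabilizes, exhibit $X_R$ as a coset of an abelian subvariety $B$, and use small points plus the trivial-trace hypothesis to force that coset to be torsion. The tactics differ. The paper fixes $x_0\in X(K)$, works with the increasing chain $X_m':=X_m-mx_0$, and shows its stable value is an abelian subvariety via a fiber-dimension argument on the addition map $X_r'\times X_r'\to X_r'$; you instead take $B=\Stab^0(X_R)$, which is automatically a subgroup, and read off $X_R=Ra+B$ from $X-X\subseteq B$. For the height step the paper pushes small points from $X$ to $X_r$ along the $r$-fold sum and then through an isogeny $A\to B\times(A/B)$, whereas you observe directly that $\pi(X)=\{\bar a\}$ under $\pi:A\to A/B$, which is a bit more economical.

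One point to tighten: in the uniqueness paragraph, ``applying the structural argument at $r'$'' does not literally work, because that argument relied on $\dim X_R=\dim X_{R+1}$, which is not given at $r'<R$. The desired inclusion $X-X\subseteq\Stab^0(X_{r'})$ is nonetheless true, for a different reason: writing the torsion coset as $X_{r'}=t'+B'$, the equality $X_{r'-1}+X=X_{r'}$ gives $y_0+X\subseteq t'+B'$ for any $y_0\in X_{r'-1}(\overline K)$, hence $X-X\subseteq B'=\Stab^0(X_{r'})$, and then $\dim X_m\leq\dim B'=\dim X_{r'}$ for all $m$, contradicting $\dim X_{r''-1}<\dim X_{r''}$. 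With this fix your uniqueness argument goes through; the paper, incidentally, does not spell out uniqueness either.
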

\begin{proof}
Fix a point $x_0\in X(K)$, which exists by replacing $K$ by a finite extension. 
Denote by 
$X_m'=X_m-mx_0$
the translation of $X_m$ by $-mx_0$ in $A$.
Note that $X_m'$ is an (irreducible) subvariety of $A$, and that the sequence 
$\{X_m'\}_m$ is increasing. 
As a consequence, there is $r\geq 1$ such that $X_m'\subsetneq X_r'$ for all $m<r$ and $X_m'= X_r'$ for all $m\geq r$.

It follows that there is a morphism $\sigma: X_r' \times X_r' \to X_{2r}' = X_r'$ induced by the addition morphism of $A$. This is sufficient to imply that $B=X_r'$ is an abelian subvariety of $A$. 
In fact, the relative dimension of $\sigma$ is equal to $\dim B$. 
By semi-continuity, the dimension of the fiber $\sigma^{-1}(0):=\{(x,y)\in B^2:x+y=0\}$ is at least $\dim B$. 
On the other hand, the first projection $\sigma^{-1}(0)\to B$ is injective on $\OK$-points, and thus bijective on $\OK$-points by comparing dimensions. 
This implies that for any $x\in B(\OK)$, $y=-x\in B(\OK)$. 
Thus we have inverse morphism $[-1]B \subset B$. 
It follows that $B$ is an abelian subvariety of $A$. 

As a consequence, $X_r=B+t$ for the abelian subvariety $B$ of $A$ and the point $t=rx_0\in A(K)$. 
Denote $C=A/B$, which is an abelian variety over $K$.
It suffices to prove that the image $t'$ of $t$ in $C(K)$ is a torsion point.
There is a surjective homomorphism $A \to B$ such that the composition $B\to A \to B$ is an isogeny. 
This induces an isogeny $A\to A'$ with $A'=B\times C$.

By assumption, $X$ contains a dense set of small points of $A/K/k$.
Since the canonical height is quadratic and positive definite up to torsion, we have 
$$
\hat h(x_1+ \cdots+ x_m)\leq  m(\hat h(x_1)+ \cdots +\hat h(x_m)), \quad x_i\in A(\OK).
$$
Then ``small points'' of $X^r$ transfer to ``small points'' of $X_r$.
As a consequence, $X_r$ contains a dense set of small points of $A/K/k$.
Then its image $X'=B+t'$ in $A'$ contains a dense set of small points of $A'/K/k$.

Take symmetric and ample line bundle $L_1$ over $B$ and $L_2$ over $C$.
Choose $L=p_1^*L_1+p_2^*L_2$, which is a symmetric and ample line bundle over $A'$, where $p_1:B\times C\to B$ and $p_2:B\times C\to B$ are the projections. 
For any point $x\in B(\OK)$, we have the heights
$$
\hat h_L(x+t')= \hat h_{L_1}(x)+\hat h_{L_2}(t') \geq \hat h_{L_2}(t') \geq 0. 
$$
This forces $\hat h_{L_2}(t')=0$ as $B+t'$ contains a dense set of small points of $A'/K/k$.

By assumption, $A$ has a trivial $\overline K/ k$-trace, so 
$B$ and $C$ have trivial $\overline K/ k$-traces.
Then $\hat h_{L_2}(t')=0$ implies that $t'\in C(K)_\tor$. 
This follows from the Northcott property (cf. \cite[Theorem 9.15]{Conrad}).
It finishes the proof.
\end{proof}

By the lemma, $X_r$ is a torsion subvariety of $A$. Replacing $X$ by a translation by a suitable torsion point, we can assume that $X_r$ is an abelian subvariety of $A$. In this process, to make $X$ to be defined over $K$, we may need to replacing $K$ by a finite extension again.
To summarize, we write this process as the following condition. 
\begin{itemize}
\item[(e)] $A'=X_r$ is an abelian subvariety of $A$ and $\dim X_{r-1}<\dim X_r$. 
\end{itemize}

\subsection{Fibers of the addition map}

By assumption (e), the image of the summation map $f_r:X^r\to A$
is an abelian subvariety $A'$ of $A$. 
This induces a summation morphism 
$$f:X_{r-1}\times X\longrightarrow A'.$$
Denote by 
$$e=\dim X_{r-1}+\dim X-\dim A'$$ 
the relative dimension of this morphism. 
By assumption (e), we have
$$e<\dim X.$$
Denote 
$$
A'_{e+1}=\{y\in A': \dim f^{-1}(y)\geq e+1 \}.
$$
Then $A'_{e+1}$ is a closed subset of $A'$ of codimension at least 2. 
Note that $X_{r-1}\times X$ is a natural subvariety of the abelian variety $A^2=A\times A$. 
The goal of this step is to prove the following result.

\begin{pro} \label{torsion fiber}
For any $t\in A'(\OK)_\tor\setminus A'_{e+1}(\OK)$, every irreducible component of the fiber $f^{-1}(t)\subset (X_{r-1}\times X)_\OK$ has canonical height 0 in $(A\times A)_\OK$.  
\end{pro}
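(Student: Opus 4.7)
The plan is to combine Proposition~\ref{prostrleqtot} (non-proper intersection), Proposition~\ref{numerical equivalent} (a torsion section is numerically a scalar multiple of a top self-intersection), and Lemma~\ref{fundamental inequality} (dense small points force canonical height zero), executing the strategy sketched in the introduction but adapted to the summation $f\colon X_{r-1}\times X\to A'$ rather than $X^r\to A$. After a finite extension of $K$, assume $t\in A'(K)_\tor$; by good reduction, $t$ extends to a torsion section $\sT\subset \sA'$. Let $m\colon \sA_{/S}^2\to \sA$ be the addition and set $B:=m^{-1}(\sA')$: then $B$ is smooth projective (base change of the smooth $\sA'$ along the smooth $m$), and $g:=m|_B\colon B\to \sA'$ is flat and proper. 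The closure $\sX_{r-1}\times_S \sX$ lies inside $B$, and the restriction $f_\sA$ of $g$ to it has generic fiber $f$ with generic relative dimension $e$.

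Apply Proposition~\ref{prostrleqtot} with $V=\sT$: $f_\sA$ is surjective since $X_{r-1}+X=A'$; $\sT\not\subset Y_{e+1}$ because the generic fiber $\{t\}$ of $\sT$ avoids $A'_{e+1}=Y_{e+1}\cap A'$ by hypothesis; and $\sT\cap Y_{e+1}$ is finite while $\sT\simeq S$ is smooth. The output is
\[
\sum_i m_i [Z_i]\;\leq\; g^*[\sT]\cdot [\sX_{r-1}\times_S \sX]\quad\text{in }\CH_{e+1}(B)_\Q,
\]
where the $Z_i$ are precisely the Zariski closures in $\sA_{/S}^2$ of the irreducible components $W_i$ of $f^{-1}(t)$. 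Choose a symmetric, $\pi$-ample, rigidified line bundle $\sL$ on $\sA$ and put $\sL_2:=p_1^*\sL+p_2^*\sL$ on $\sA_{/S}^2$ (nef by Lemma~\ref{line bundle}(3)). Intersecting both sides with $(\sL_2|_B)^{e+1}$, the naive-equals-canonical identity of Section~\ref{sec line bundle} rewrites the left-hand side as $\sum_i m_i(e+1)\deg_{L_2}(W_i)\,\hat h_{L_2}(W_i)$, each summand nonnegative.

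The crux is to show the right-hand side vanishes. Proposition~\ref{numerical equivalent} applied to $(\sA',\sL|_{\sA'},\sT)$ gives $[\sT]\equiv d^{-1}(\sL|_{\sA'})^{g'}$ numerically, with $d:=(L|_{A'})^{g'}$ and $g':=\dim A'$; proper flat pullback by $g$ and the projection formula convert the RHS into the intersection number
\[
d^{-1}\,\sL_2^{e+1}\cdot (m^*\sL)^{g'}\cdot [\sX_{r-1}\times_S \sX]
\]
on $\sA_{/S}^2$. To kill this, vary the line bundle: for every $a>0$ and $b\geq 0$, $a\sL_2+bm^*\sL$ is symmetric, rigidified, and $\pi$-ample (ample $+$ nef on each fiber of $\sA_{/S}^2\to S$). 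Using $\hat h_L(x+y)\leq 2(\hat h_L(x)+\hat h_L(y))$, small points of $X_{r-1}\times X$ for $\hat h_{L_2}$ stay small for $\hat h_{aL_2+bm^*L}$; since $X$, hence its summation image $X_{r-1}$, hence $X_{r-1}\times X$, has dense small points, Lemma~\ref{fundamental inequality} gives $\hat h_{aL_2+bm^*L}(X_{r-1}\times X)=0$, and so $(a\sL_2+bm^*\sL)^{e+g'+1}\cdot [\sX_{r-1}\times_S \sX]=0$ for all such $(a,b)$ by the naive-canonical dictionary. This is a homogeneous polynomial in $(a,b)$ vanishing on an open cone, hence identically zero; extracting the $a^{e+1}b^{g'}$-coefficient yields the RHS$=0$. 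The inequality then forces each $\hat h_{L_2}(W_i)=0$, which is the claim.

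The main obstacle is precisely the non-properness of the intersection $m^{-1}(\sT)\cap (\sX_{r-1}\times_S \sX)$ inside $\sA_{/S}^2$, whose expected and actual dimensions differ by the excess $g-g'$: Proposition~\ref{prostrleqtot} is tailored to bound the proper part by a one-sided inequality, and without it the direct intersection calculation breaks down. A secondary subtlety is that the polynomial-interpolation step must be carried out over a range of $(a,b)$ that simultaneously keeps $a\sL_2+bm^*\sL$ in $\pi$-ample position and preserves the density-of-small-points condition; the half-cone $\{a>0,\,b\geq 0\}$ is just large enough for both.
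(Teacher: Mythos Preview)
Your argument is correct and follows the paper's route almost exactly: pass to the abelian subscheme $\sB=m^{-1}(\sA')$, apply Proposition~\ref{prostrleqtot} to bound the proper components of $f^{-1}(\sT)$ by $g^*[\sT]\cdot[\sY]$, replace $[\sT]$ numerically by a top self-intersection via Proposition~\ref{numerical equivalent}, and conclude using Lemma~\ref{fundamental inequality}. The one substantive deviation is in how you kill the mixed term $\sL_2^{e+1}\cdot(m^*\sL)^{g'}\cdot[\sY]$: the paper invokes Lemma~\ref{line bundle}(4) to dominate $h^*\sL_{\sA'}$ by a nef multiple of $\sL_\sB$ and squeezes the number between $0$ and $[\sL_\sB]^{\dim\sY}\cdot[\sY]=0$, whereas you run the small-points argument for the whole pencil $a\sL_2+bm^*\sL$ and read off the vanishing of each coefficient of the resulting polynomial. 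Both are short; your interpolation trades the nef-comparison lemma for a one-line polynomial identity, and as a bonus your reduction to a genuine section (by enlarging $K$) sidesteps the paper's coprimality hypothesis on the order of $\sT$.

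Two cosmetic remarks. First, after your base change the components $W_i$ of $f^{-1}(t)$ need not individually be defined over $K$, so the $Z_i$ are really closures of Galois orbits of the $W_i$; this is harmless since by the paper's convention $\hat h(W_i)$ is defined via its $K$-image, which is one of the $Z_{i,K}$. Second, your closing diagnosis of the obstacle is slightly off: once you work inside $B$ the excess $g-g'$ disappears and the expected dimension is already $e+1$; what Proposition~\ref{prostrleqtot} is actually handling is the possible jump of fibre dimension over $\sT\cap Y_{e+1}$.
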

\begin{proof}
Denote by $B$ the preimage of $A'$ under the summation map $A\times A\to A$. 
Since the preimage of $0$ under $A\times A\to A$ is geometrically integral, $B$ is also geometrically integral, and thus an abelian subvariety of $A\times A$ over $K$. We have the following commutative diagram.
$$
\xymatrix{
&\quad X_{r-1}\times X \quad  \ar[r] \ar[rd]^f  & \quad B \quad  \ar[d]^h \ar[r] &
\quad A\times A \quad \ar[d]
\\
&
&
\quad A' \quad \ar[r]
&
\quad A \quad
}
$$

Let $S/k$ with $K=k(S)$ be as in assumption (b). 
Denote by $\pi:\sA\to S$ the unique abelian scheme extending the abelian variety $A\to \Spec K$, which exists by assumption (d). 
Then $\sA\times_S\sA \to S$ is the unique abelian scheme extending $A\times A\to \Spec K$.
Denote by $\sX$ the Zariski closure of $X$ in $A$. 
Denote by $\sA'$ (resp. $\sB$) the Zariski closure of $A'$ in $\sA$ (resp. $B$ in $\sA\times_S\sA$). Both $\sA'$ and $\sB$ are abelian schemes over $S$. 
Denote by $\sY$ is the Zariski closure of $Y=X_{r-1}\times X$ in $\sA\times_S \sA$. 
This gives an integral version of the above diagram.
$$
\xymatrix{
\quad f^{-1}(\sT) \quad \ar[r] \ar[rd]
&\quad \sY \quad  \ar[r] \ar[rd]^f  & \quad \sB \quad  \ar[d]^h \ar[r] &
\quad \sA\times_S\sA \quad \ar[d]
\\
&
\quad \sT \quad \ar[r]
&
\quad \sA' \quad \ar[r]
&
\quad \sA \quad
}
$$

Let $\sT$ be a torsion multi-section of $\sA'\to S$ of order non-divisible by $\Char K$.
Then $\sT\to S$ is finite and \'etale, and $\sT$ is a smooth projective curve over $k$.
Assume that $T=\sT_K$ is not contained in $A'_{e+1}.$
Apply Proposition \ref{prostrleqtot} to the triangle of the second diagram. 
We obtain that 
$$\sum_{i=1}^n m_i[\sZ_i]\leq h^*[\sT]\cdot [\sY]$$
in $\CH_{e+1}(\sB)_\Q$. 
Here $\sZ_1, \dots, \sZ_n$ are irreducible components of $f^{-1}(\sT)$ satisfying 
$f(\sZ_i)=\sT$, and $m_i$ is the multiplicity of $\sZ_i$ in $f^{-1}(\sT).$ 

Let $\sL_{\sA'}$ (resp. $\sL_\sB$) be a symmetric, relatively ample and rigidified line bundle over $\sA'$ (resp. $\sB$).
By Proposition \ref{numerical equivalent}, there is a numerical equivalence 
$$[\sT]\equiv
a\, [\sL_{\sA'}]^{\dim A'}$$ 
in $\CH_{1}(\sA')_\Q$ for some $a>0$.

By these two results, we have
$$\sum_{i=1}^n m_i[\sZ_i]\cdot [\sL_\sB]^{e+1}
\leq a\,  h^*[\sT]\cdot [\sY] \cdot [\sL_\sB]^{e+1}
=a\, [h^*\sL_{\sA'}]^{\dim A'} \cdot [\sL_\sB]^{e+1}\cdot [\sY] .$$
By Lemma \ref{line bundle}(4), there is a constant $b>0$ such that $b\, \sL_\sB-h^*\sL_{\sA'}$ is a nef line bundle over $\sB$.
As a consequence, 
$$[h^*\sL_{\sA'}]^{\dim A'} \cdot [\sL_\sB]^{e+1} \cdot [\sY]
\leq 
b^{\dim A'}\,  [\sL_\sB]^{\dim \sY} \cdot [\sY].
$$

As in the proof of Lemma \ref{addition},  ``small points'' of $X$ transfer to ``small points'' of $Y=X_{r-1}\times X$.
Then $Y$ has a dense set of small points in $B$.
By Lemma \ref{fundamental inequality}, the height
$h_{\sL_\sB}(Y)=0$. 
Then we have $ [\sY] \cdot [\sL_\sB]^{\dim \sY}=0$. 
This forces $[\sZ_i]\cdot [\sL_\sB]^{e+1}$, and thus $h_{\sL_\sB}(Z_i)=0$.
Here $Z_i=\sZ_{i,K}$ for $i=1, \dots, n$ are exactly the irreducible components of $f^{-1}(\sT_K)$. 
This finishes the proof.
\end{proof}

\subsection{Lowering the dimension}

Now we prove that $X$ is torsion by induction on $\dim X$.
If $\dim X=0$, we must have $\hat h(X)=0$, and then $X$ is a torsion point of $A$ by the Northcott theorem (cf. \cite[Theorem 9.15]{Conrad}).  

If $\dim X>0$, consider the morphism $f:X_{r-1}\times X\to A'$ obtained in assumption (e). 
By Proposition \ref{torsion fiber}, for any $t\in A'(\OK)_\tor\setminus A'_{e+1}(\OK)$, every irreducible component of the fiber $f^{-1}(t)\subset (X_{r-1}\times X)_\OK$ has canonical height 0 in $A\times A$.  
Note that every irreducible component of $f^{-1}(t)$ has dimension $e<\dim X$.
By induction, it is torsion in $A\times A$. 
Therefore, $f^{-1}(t)$ contains a Zariski dense set of torsion points of $A\times A$. 
As $A'(\OK)_\tor\setminus A'_{e+1}(\OK)$ is Zariski dense in $A'$,
$X_{r-1}\times X$ contains a Zariski dense set of torsion points of $A\times A$.  

By the Manin--Mumford conjecture (cf. Theorem \ref{MM}), 
$X_{r-1}\times X$ is a torsion subvariety of $A\times A$.  
Then $X$ is a torsion subvariety of $A$, since it is the image of the composition of $X_{r-1}\times X\to A\times A$ with $p_2:A\times A\to A$. 
This finishes the proof of Theorem \ref{thmmain}.


\bibliography{dd}

\end{document}